\documentclass[11pt]{article}
\usepackage[utf8]{inputenc}
\usepackage[T1]{fontenc}
\usepackage{amsmath, amssymb, amsthm}
\usepackage[margin=1in]{geometry}
\usepackage{xcolor}
\newcommand{\rev}[1]{#1}
\usepackage{graphicx}
\graphicspath{{./}{reda/}}
\usepackage{booktabs}
\usepackage[hidelinks]{hyperref}
\usepackage{url}
\usepackage{doi}
\usepackage{comment}
\newtheorem{remark}{Remark}
\newtheorem{lemma}{Lemma}
\newtheorem{theorem}{Theorem}
\newtheorem{proposition}{Proposition}
\numberwithin{equation}{section}
\newtheoremstyle{examplestyle}
  {6pt}
  {6pt}
  {\normalfont}
  {}
  {\normalfont\itshape}
  {.}
  {0.5em}
  {}
\theoremstyle{examplestyle}
\newtheorem{example}{Example}[subsection]
\theoremstyle{plain}
\usepackage{cite}
\usepackage{float}
\usepackage{algorithm}
\usepackage{algpseudocode}
\usepackage{authblk}

\begin{document}

\title{
A cylindrical neural approximation theorem for conditional laws of McKean–Vlasov equations with common noise
}

\author{%
Nacira Agram\textsuperscript{1,2}
\hspace{1.5cm}
Reda Hmioui\textsuperscript{1,3}
\hspace{1.5cm}
Jan Rems\textsuperscript{4}
}

\date{August 8, 2026}

\maketitle

\footnotetext[1]{Department of Mathematics, KTH Royal Institute of Technology, 100 44 Stockholm, Sweden.
Email: nacira@kth.se. Supported by the Swedish Research Council grant (2020-04697).}

\footnotetext[2]{Digital Futures, Sweden.}

\footnotetext[3]{Grenoble INP -- Ensimag, Université Grenoble Alpes,
Grenoble, France.
Email: reda.hmioui@grenoble-inp.org.}

\footnotetext[4]{Department of Mathematics, University of Ljubljana,
Ljubljana, Slovenia.
Email: jan.rems@fmf.uni-lj.si.  Supported by the Slovenian Research and Innovation Agency, research core funding No. P1-0448.}

\begin{abstract}
We introduce conditional cylindrical neural networks for approximating
functionals of conditional laws in McKean--Vlasov equations with common
noise. Fourier moments of the initial law and truncated signatures of the
\rev{time augmented} common noise are mapped by a \rev{mixture density} network to a
Gaussian mixture approximation of the conditional law. A cylindrical neural
network then evaluates the target functional through analytic integrals
against this predicted measure.

Rough path \rev{well posedness} and stability provide a conditional law map that is
continuous in the initial distribution and the rough driver and agrees almost
surely with the classical conditional law at the It\^o Brownian lift.
Combining this continuity with Fourier separation, signature uniqueness,
Wasserstein density of Gaussian mixtures, and neural universal approximation,
we prove an $L^2$ universal approximation theorem for continuous square
integrable functionals.

The numerical study implements the resulting two stage procedure on six
examples, including \rev{non Gaussian} initial laws, nonlinear drift, multiplicative
common noise, and a \rev{two dimensional} state. Independent particle references are
used when no \rev{closed form} law is available. The learned conditional law and
functional approximations consistently improve on the empirical particle plug
in, and additional experiments examine feature sensitivity, training from one
\rev{terminal observation per common noise scenario}, and It\^o--Stratonovich
consistency.
\end{abstract}

\noindent\textbf{Keywords:}
McKean--Vlasov equations; common noise; rough paths; path signature; conditional law;
\rev{mixture density} networks; universal approximation; deep learning.

\noindent\textbf{MSC 2020:} 60L20, 60L10; 60H30, 68T07; 65C35.

\section{Introduction}
McKean--Vlasov dynamics with common noise describe large populations of interacting agents driven simultaneously by idiosyncratic randomness and by an aggregate shock; they arise as limits of interacting particle systems and as the forward dynamics of \rev{mean field} games in a random environment, see Carmona and Delarue \cite{carmona_delarue_2018}. \rev{A numerical treatment of conditional McKean--Vlasov dynamics with common noise is given by Agram and Rems~\cite{agram2025conditional}.} In the presence of common noise, the natural state variable is no longer the law of a representative agent but its \emph{conditional} law given the common noise, \(\mu_t=\mathcal L(X_t\mid\mathcal F^{W^0}_t)\) --- a \rev{measure valued} stochastic process. Many quantities of interest, such as value functions or risk and pricing functionals, take the form \(V(x,\mu_t)\) and must be evaluated along this random flow of measures. This raises a basic question of approximation theory: can such functionals be represented, to arbitrary accuracy, by a single trainable architecture whose inputs are the initial law and the \rev{common noise path}?
 
Two obstructions stand in the way. First, the conditional law is only defined almost surely: classical \rev{well posedness} theory provides no topology on the \rev{common noise path} for which \(\omega^0\mapsto\mu_t(\omega^0)\) is a continuous function, and without continuity on a Polish space the \rev{compactness based} approximation arguments below could not even be formulated. We resolve this through the rough path formulation of Friz, Hocquet and L\^e \cite{friz2025mckean}: freezing the common noise as a deterministic rough path, their \rev{well posedness} and stability theory produces a conditional law map \(\rev{\Phi_{T,t}}\), jointly continuous in the initial law and in the driver; their randomization procedure, combined with the
identification theorem of Friz, Lê and Zhang~\cite[Theorem~3.5]{friz2025randomisation}, shows that $\rev{\Phi_{T,t}}$ evaluated at the Brownian rough path lift coincides almost surely with
the conditional law. All of this is recorded in Proposition~1, for which we
claim no originality. Because the analytic theory of~\cite{friz2025mckean} does not require the driver to be geometric, we drive the equation by the Itô lift of the
\rev{time augmented} common noise, for which the identification holds without any
correction term; geometricity, which is required by the signature uniqueness
theorem~\cite{boedihardjo2014signature}, enters only through the \emph{features}, computed from the Stratonovich lift, itself obtained from the Itô lift by an explicit deterministic \rev{level two} translation (Section~2.1).
 
Second, \(V\) is a function on \(\mathbb R^d\times\mathcal P_2(\mathbb R^d)\), an \rev{infinite dimensional} input. Following Pham and Warin \cite{pham2022meanfield}, the measure argument is processed through a cylindrical network \(\Psi(x,\langle\varphi,\mu\rangle)\); the conditional structure is captured by a \rev{mixture density} network $\hat N$ with Gaussian mixture output, fed by finitely many features of the data: Fourier moments $E_L(\mu_0)$ of the initial law and the truncated signature $S_M$ of the lifted common noise, restricted to $[0,t]$. Our main result (Theorem~1) states that the class
\[
\hat V(x,\mu_0,w)=\Psi\!\left(x,\langle \varphi,\hat N(E_L(\mu_0),S_M(r_t w))\rangle\right)
\]
is dense in $L^2(\nu\otimes \mathbf P^0\otimes \mu_t)$, where $r_t$ denotes the restriction of the rough path to the interval $[0,t]$.

The factorization separates two tasks. The \rev{mixture density} network provides a
differentiably parametrized \rev{finite dimensional} surrogate of the pathwise
conditional law, while the cylindrical network evaluates measure dependent objectives without
new particle simulation at inference time. Algorithm~\ref{alg:conditional-cylindrical-training}
turns this representation into a two stage procedure: the law network is first
trained by conditional likelihood, then frozen while the cylindrical network
is trained by regression.

The present paper is an \rev{approximation theoretic} complement to the analytic theory of~\cite{friz2025mckean,friz2025randomisation}. Proposition~1 assembles their \rev{well posedness}, stability and randomization results into the continuity statement required here. The remaining imported ingredients are signature uniqueness~\cite{boedihardjo2014signature}, cylindrical density in the spirit of Pham--Warin~\cite{pham2022meanfield}, $\mathcal W_2$ density of Gaussian mixtures and the classical universal approximation theorem~\cite{hornik1991approximation}. What is new is their assembly into the \rev{finite factorization} Lemma~1 and Theorem~1: on compact sets, the conditional law map can be approximated through finitely many Fourier moments of the initial law and signature coordinates of the common noise, and the resulting \rev{law valued} factor can be realized by an MDN.

The theorem remains qualitative and gives no rates in the Fourier order,
signature level, number of mixture components, or network size. The numerical
study therefore examines the representation empirically rather than claiming a
convergence rate. Six numbered examples cover Gaussian, uniform, and Gaussian
mixture initial laws, a \rev{double well} model, multiplicative common noise, and a
\rev{two dimensional} system. For models without a \rev{closed form} conditional law,
independent particle systems provide the measure inputs and regression targets,
which avoids direct information reuse between the two channels. The study
reports results across multiple seeds, optimization diagnostics, sensitivity to
features and architecture, training from one \rev{terminal observation per
common noise scenario}, and an
It\^o--Stratonovich consistency check. Complete results and aggregation scripts
are provided in the companion repository.

Signature encodings of the common noise have recently been used in numerical schemes for conditional McKean--Vlasov dynamics \cite{agram2025conditional}; the present work complements these algorithmic contributions with an approximation guarantee, the continuity input being supplied by \cite{friz2025mckean}. The flow of conditional measures can alternatively be characterized as the solution of a nonlinear rough Fokker--Planck equation \cite{bugini2025nonlinear}, a natural route towards quantitative refinements of Theorem 1.
 
The paper is organized as follows. Section~2 introduces the functional setting,
states the standing assumptions, proves Proposition~1, and defines the network
classes. Section~3 states and proves the universal approximation theorem.
Section~4 reports the numerical experiments.

\section{Setting and notation}

This section fixes the probabilistic, rough path, and approximation notation
used throughout the paper. We first introduce the spaces of probability
measures and \rev{common noise paths}, then define path restriction and truncated
signature features. We finally state the pathwise conditional law map and
describe the conditional cylindrical network class used in the approximation
theorem.

\subsection{Spaces}
We equip \(\mathcal P_2(\mathbb R^d)\) with the Wasserstein distance
\(\mathcal W_2\). Then \((\mathcal P_2(\mathbb R^d),\mathcal W_2)\) is a Polish space. More generally, \((\mathcal{P}_m(\mathbb{R}^d), \mathcal{W}_m)\) is a complete
separable, hence Polish, metric space for every \(m \geq 2\)
\cite[Theorem~6.18]{villani_ot}; this will be needed whenever the
rough path \rev{well posedness} theory imposes a moment exponent \(m>2\).

Fix a horizon \(T>0\), an integer \(q\geq1\), and let
$
\Omega^0
=
C_0([0,T];\mathbb R^q)
=
\{\omega^0\in C([0,T];\mathbb R^q):\omega^0_0=0\},
$
endowed with the Wiener measure \(\mathbb P^0\) and its canonical filtration
$
\mathbb F^0=(\mathcal F^0_t)_{t\le T}.
$

Fix \(\alpha\in(1/3,1/2)\). Since Brownian paths have infinite
\(1\) variation, the iterated Riemann--Stieltjes integrals defining the
signature are not well defined pathwise. We therefore work with the \rev{time augmented} common noise
$
\widehat W^0_t := (t,W^0_t)\in\mathbb R^{q+1}.
$
We write \(C^{0,\alpha}([0,T];\mathbb R^{q+1})\) for the space of
step two \(\alpha\) Hölder rough paths, not necessarily geometric, and
\(C_g^{0,\alpha}([0,T];\mathbb R^{q+1})\) for its geometric
subspace. Both are equipped with the rough path metric \(\rho_\alpha\).
The geometric space is the closure of the canonical lifts of smooth paths;
it is Polish, and the Stratonovich Brownian lift belongs to it almost surely
\cite[Chapters~8--9 and~13]{friz_victoir_roughpaths}.

The \rev{time augmented} Brownian path has two lifts with different roles. For
$0\leq s\leq t\leq T$, define their second levels by
\[
\widehat{\mathbb W}^{0}_{s,t}
:=
\int_s^t
\bigl(\widehat W_u^0-\widehat W_s^0\bigr)
\otimes\circ d\widehat W_u^0,
\]
and
\[
\widehat{\mathbb W}^{0,\mathrm{It\hat{o}}}_{s,t}
:=
\int_s^t
\bigl(\widehat W_u^0-\widehat W_s^0\bigr)
\otimes d\widehat W_u^0.
\]
Here $\otimes$ denotes the tensor product, while $\circ d\widehat W_u^0$
indicates Stratonovich integration. The Stratonovich lift is
\[
\widehat{\mathbf W}^{0}
=\bigl(\widehat W^0,\widehat{\mathbb W}^{0}\bigr)
\in C_g^{0,\alpha}([0,T];\mathbb R^{q+1})
\qquad \mathbb P^0\text{ a.s.}
\]
and supplies the signature features. Its It\^o lift is
\[
\widehat{W}^{0,\mathrm{It\hat{o}}}
=\bigl(\widehat W^0,\widehat{\mathbb W}^{0,\mathrm{It\hat{o}}}\bigr)
\in C^{0,\alpha}([0,T];\mathbb R^{q+1})
\qquad \mathbb P^0\text{ a.s.}
\]
and supplies the driver used in the analytic identification of the
conditional law.

The two lifts have the same first level and differ at the second level by the
deterministic It\^o--Stratonovich translation displayed in Remark~2. This
relation is continuous in both directions, so the two lifts contain the same
pathwise information, even though only the Stratonovich lift is geometric.

We keep distinct notation for the two probability measures used below:
\(\mathbb P^0\) is Wiener measure on the raw path space \(\Omega^0\), while
\[
\mathbf P^0
:=\bigl(\widehat{\mathbf W}^{0}\bigr)_{\#}\mathbb P^0
\]
is the law of the geometric lifted path on
\(C_g^{0,\alpha}([0,T];\mathbb R^{q+1})\). Integrals over rough
paths use \(\mathbf P^0(d\mathbf w)\); almost sure statements on the raw
Wiener space use \(\mathbb P^0\).

Finally, the time augmentation removes the tree like ambiguity in signature
uniqueness. By the uniqueness theorem of Boedihardjo--Geng--Lyons--Yang
\cite[Theorem~1.1]{boedihardjo2014signature}, the signature is injective on
the \rev{time augmented} geometric rough paths considered here.

\subsection{Restriction to $[0,t]$ and truncated signature}

Throughout the paper, a fixed evaluation time $t \in (0,T]$ is considered; the case $t=0$ is trivial, since $\mu_0$ is part of the data. Because the value $\mu_t$ can only depend on the driver through its restriction to $[0,t]$, we introduce the restriction operator first and define all path features on $[0,t]$.

\paragraph{Restriction to $[0,t]$.}

For $t\in(0,T]$, let
\[
r_t:C^{0,\alpha}_g([0,T];\mathbb{R}^{q+1}) \to C^{0,\alpha}_g([0,t];\mathbb{R}^{q+1}),
\qquad
r_t w := w_{|[0,t]},
\]
where both levels of the rough path are restricted to $\{(s,u):0\le s\le u\le t\}$. Since the Hölder seminorms on a subinterval are bounded by those on the whole interval, $r_t$ is continuous, in fact 1 Lipschitz, for the rough path metric $\rho_\alpha$. Consequently, if $K_\omega\subset C^{0,\alpha}_g([0,T];\mathbb{R}^{q+1})$ is compact, then $r_t(K_\omega)$ is compact in $C^{0,\alpha}_g([0,t];\mathbb{R}^{q+1})$.

\paragraph{Truncated signature.}

For $v\in C^{0,\alpha}_g([0,t];\mathbb{R}^{q+1})$ and $M\in\mathbb{N}$, write
\[
S_M(v)=\bigl(1,v^{(1)}_{0,t},\ldots,v^{(M)}_{0,t}\bigr)\in T^{(M)}(\mathbb{R}^{q+1}),
\]
the signature truncated at level $M$, valued in the truncated tensor algebra of dimension
\[
d_{\mathrm{sig}}(M,q)=\sum_{j=0}^{M}(q+1)^j=\frac{(q+1)^{M+1}-1}{q}.
\]

The map $v\mapsto S_M(v)$ is continuous from
$\bigl(C^{0,\alpha}_g([0,t];\mathbb{R}^{q+1}),\rho_\alpha\bigr)$
to $T^{(M)}(\mathbb{R}^{q+1})$: the signature map is continuous in the
$p$ variation rough path topology \cite[Section~9.1.3]{friz_victoir_roughpaths},
and the $\alpha$ Hölder rough path topology is stronger than the
$p$ variation topology for $p=1/\alpha$. In particular, the composition
$w\mapsto S_M(r_t w)$ is continuous on
$C^{0,\alpha}_g([0,T];\mathbb{R}^{q+1})$. Every signature feature
below is computed from the restricted geometric path and therefore appears
as $S_M(r_t w)$.

\subsection{The target function: Exact conditional law}
Fix $t\in(0,T]$. Proposition~1 below provides, under the assumptions of
Friz--Hocquet--L\^e~\cite{friz2025mckean}, an analytic law map
\[
\rev{\Phi_{T,t}} : \mathcal{P}_m(\mathbb{R}^d)\times
C^{0,\alpha}([0,T];\mathbb{R}^{q+1})
\longrightarrow \mathcal{P}_2(\mathbb{R}^d),
\]
together with its restricted version
\[
\overline{\Phi}_t : \mathcal{P}_m(\mathbb{R}^d)\times
C^{0,\alpha}([0,t];\mathbb{R}^{q+1})
\longrightarrow \mathcal{P}_2(\mathbb{R}^d),
\]
related by the causality identity
\[
\rev{\Phi_{T,t}}(\mu_0,w)
=\overline{\Phi}_t(\mu_0,r_t w).
\]
Both maps are continuous for $\mathcal W_m\times\rho_\alpha$ at the source
and $\mathcal W_2$ at the target. When the driver is the It\^o lift, the
full horizon map $\rev{\Phi_{T,t}}$ is identified almost surely with
$\mathcal L(X_t\mid\mathcal F_t^{W^0})$.

Only the continuity of the restricted map $\overline{\Phi}_t$ is used in the
approximation theory of Section~3. Accordingly, the driver $\mathbf w$ ranges
over geometric lifts so that signature features can be used, and we write
\[
\mu_t(\mathbf w):=\overline{\Phi}_t(\mu_0,r_t\mathbf w),
\]
suppressing the dependence on $\mu_0$ from the notation. By the identification
of Proposition~1 and the \rev{level two} relation discussed in Remark~2, this map
coincides almost surely with the conditional law of the It\^o system carried
by the same Brownian path.

Let $\nu$ be a training measure on $\mathcal{P}_m(\mathbb{R}^d)$ (the law of the initial condition $\mu_0$). The \rev{mean field} function to be learned is
\[
V:\mathbb{R}^d\times \mathcal{P}_2(\mathbb{R}^d)\to \mathbb{R}^p,
\]
assumed continuous for the product topology on $\mathbb{R}^d\times \mathcal{P}_2(\mathbb{R}^d)$, where $\mathbb{R}^d$ carries its Euclidean topology and $\mathcal{P}_2(\mathbb{R}^d)$ the topology induced by $W_2$. This is the hypothesis of Pham--Warin. We further assume the mean square integrability condition
\[
\|V\|_{\Gamma_2}^2
:=\int_{\mathcal{P}_m(\mathbb{R}^d)}
\int_{C_g^{0,\alpha}([0,T];\mathbb{R}^{q+1})}
\int_{\mathbb{R}^d}
\bigl|V(x,\mu_t(\mathbf w))\bigr|^2\,
\mu_t(\mathbf w)(dx)\,\mathbf P^0(d\mathbf w)\,\nu(d\mu_0)<\infty,
\]
where $\mu_t(\mathbf w)=\overline{\Phi}_t(\mu_0,r_t\mathbf w)$ as above.

\subsection{The conditional law map}

The conditional law of a McKean--Vlasov equation with common noise is usually
defined only almost surely as a random probability measure. For the
approximation argument, we need a pathwise version that can be evaluated at a
deterministic realization of the common noise rough path and that depends
continuously on both the initial law and the driver. The following proposition
records this consequence of the \rev{well posedness}, stability, and randomization
results of Friz--Hocquet--L\^e and Friz--L\^e--Zhang
\cite{friz2025mckean,friz2025randomisation}. It provides the analytic
input used in Sections~3--4; no originality is claimed for this result.

Fix $\alpha\in(1/3,1/2)$ and a moment exponent $m\ge2$ for which the
\rev{well posedness} and stability results of Friz--Hocquet--L\^e
\cite[Theorems~3.10--3.11]{friz2025mckean} apply. Let
$a=(a_s)_{s\leq T}$ be a fixed admissible control with values in a control
space $\mathsf A$. For some $d_B\geq1$, let $B$ be an
$\mathbb R^{d_B}$ valued idiosyncratic Brownian motion, and let
$w\in C^{0,\alpha}([0,T];\mathbb R^{q+1})$ be a deterministic rough driver,
not necessarily geometric. For an initial law
$\mu_0\in\mathcal P_m(\mathbb R^d)$, consider the frozen McKean--Vlasov
rough equation

\[
\begin{aligned}
b&:\mathbb R^d\times\mathcal P_m(\mathbb R^d)\times\mathsf A
\longrightarrow\mathbb R^d,\\
\sigma&:\mathbb R^d\times\mathcal P_m(\mathbb R^d)\times\mathsf A
\longrightarrow
\mathcal L(\mathbb R^{d_B},\mathbb R^d)
\cong\mathbb R^{d\times d_B},\\
f&:\mathbb R^d\times\mathcal P_m(\mathbb R^d)\times\mathsf A
\longrightarrow
\mathcal L(\mathbb R^{q+1},\mathbb R^d)
\cong\mathbb R^{d\times(q+1)}.
\end{aligned}
\]

With these coefficient maps, the equation is
\[
dX_s=b(X_s,\mu_s,a_s)\,ds
+\sigma(X_s,\mu_s,a_s)\,dB_s
+f(X_s,\mu_s,a_s)\,dw_s,
\qquad
\mu_s=\mathcal{L}(X_s),
\qquad
\mathcal L(X_0)=\mu_0.
\]
Assume that the coefficients are bounded and satisfy the regularity
hypotheses of \cite[Theorems~3.10--3.11]{friz2025mckean}. In the cited
controlled path formulation, the rough coefficient is specified by a pair
$(f,f')$. When the coefficients have no explicit dependence on the rough
driver, the direct driver derivative may be taken as $f'=0$. This does not
mean that the controlled integrand $f(X,\mu,a)$ has zero Gubinelli
derivative: its derivative still contains the spatial and measure terms
produced by composition with the solution
\cite[Definition~3.7 and Eq.~(3.13)]{friz2025mckean}. The first column of
$f\in\mathbb R^{d\times(q+1)}$ multiplies the time component of the augmented
driver; in the applications below it is zero because the drift is kept in
$b$.

\begin{proposition}[Pathwise conditional law map]
Under the preceding assumptions, the frozen equation has a unique solution

$X^{\mu_0,w}$. For every $t\in[0,T]$, the map
\[
\rev{\Phi_{T,t}}:(\mathcal P_m(\mathbb R^d),\mathcal W_m)
\times\bigl(C^{0,\alpha}([0,T];\mathbb R^{q+1}),\rho_\alpha\bigr)
\longrightarrow(\mathcal P_2(\mathbb R^d),\mathcal W_2),
\qquad
\rev{\Phi_{T,t}}(\mu_0,w):=\mathcal L(X_t^{\mu_0,w}),
\]
is well defined and continuous. The corresponding restricted map for the
equation posed on $[0,t]$ is
\[
\overline\Phi_t:(\mathcal P_m(\mathbb R^d),\mathcal W_m)
\times\bigl(C^{0,\alpha}([0,t];\mathbb R^{q+1}),\rho_\alpha\bigr)
\longrightarrow(\mathcal P_2(\mathbb R^d),\mathcal W_2),
\qquad
\overline\Phi_t(\mu_0,v):=\mathcal L(X_t^{\mu_0,v}),
\]
which is also well defined and continuous. The two maps satisfy the causality
identity
\[
\rev{\Phi_{T,t}}(\mu_0,w)
=\overline\Phi_t(\mu_0,r_t w).
\]

When the deterministic driver is replaced by the random It\^o lift of the
\rev{time augmented} common noise, this pathwise law coincides almost surely with
the conditional law:
\[
\rev{\Phi_{T,t}}\bigl(\mu_0,
\widehat{W}^{0,\mathrm{It\hat{o}}}(\omega^0)\bigr)
=
\mathcal{L}(X_t\mid\mathcal{F}_t^{W^0})(\omega^0),
\]
where $X$ solves the associated classical It\^o common noise
McKean--Vlasov equation. No drift correction appears in this identification.
\end{proposition}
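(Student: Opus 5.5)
The plan is to assemble the proposition from the cited results in four steps: well posedness, continuity, causality, and identification. Continuity of the maps into $\mathcal W_2$ will come from stability in $\mathcal W_m$ together with $m\ge2$. Causality will come from uniqueness on $[0,t]$. The identification will come from randomization.

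\emph{Well posedness.} First I will invoke \cite[Theorem~3.10]{friz2025mckean} with the frozen driver $w\in C^{0,\alpha}([0,T];\mathbb R^{q+1})$. The coefficients are bounded and regular, and the rough coefficient is the controlled pair $(f,0)$. This gives a unique solution $X^{\mu_0,w}$ in the controlled path sense, so $\Phi_{T,t}(\mu_0,w)=\mathcal L(X^{\mu_0,w}_t)$ is well defined. The solution has finite $m$th moments, so this law lies in $\mathcal P_m\subset\mathcal P_2$.

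\emph{Continuity.} The stability estimate \cite[Theorem~3.11]{friz2025mckean} bounds the solution difference by $\mathcal W_m(\mu_0,\tilde\mu_0)+\rho_\alpha(w,\tilde w)$, locally uniformly on bounded sets of drivers. I will apply it with both solutions built on the same probability space. The initial conditions will be coupled optimally for $\mathcal W_m$, and the same idiosyncratic $B$ will be used for both. This gives an $L^m$ bound on $X_t^{\mu_0,w}-X_t^{\tilde\mu_0,\tilde w}$. Since $\mathcal W_2\le\mathcal W_m\le\|X_t-\tilde X_t\|_{L^m}$, joint continuity into $(\mathcal P_2,\mathcal W_2)$ follows. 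The same argument on the horizon $[0,t]$ gives continuity of $\overline\Phi_t$.

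\emph{Causality.} The restriction $r_t$ is defined on the whole space $C^{0,\alpha}$, not only the geometric part. The solution restricted to $[0,t]$ solves the equation driven by $r_tw$ on $[0,t]$. This is because the controlled rough integral on $[0,t]$ involves only increments $(w_{s,u},\mathbb w_{s,u})$ with $s\le u\le t$, and the mean field term $\mu_s$ for $s\le t$ is determined by the solution on $[0,t]$. By uniqueness on $[0,t]$, the restricted solution is $X^{\mu_0,r_tw}$, and taking laws at time $t$ gives the causality identity.

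\emph{Identification.} This is the main obstacle, because it requires measurability and a genuine probabilistic argument rather than pathwise analysis. I will use the randomization procedure of \cite{friz2025mckean}. The Itô lift $\widehat W^{0,\mathrm{It\hat o}}$ is a random variable in $C^{0,\alpha}$, and $\Phi_{T,t}$ is continuous, hence Borel. So $\omega^0\mapsto\Phi_{T,t}(\mu_0,\widehat W^{0,\mathrm{It\hat o}}(\omega^0))$ is $\mathcal F^0_t$-measurable; it depends only on the path up to time $t$ by causality. Next, define $\bar X_s(\omega^0,\omega^1)=X_s^{\mu_0,\widehat W^{0,\mathrm{It\hat o}}(\omega^0)}(\omega^1)$ on the product space, with $B$ independent of $W^0$. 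By \cite[Theorem~3.5]{friz2025randomisation}, the rough integral against an Itô lift of an adapted controlled integrand coincides a.s. with the Itô integral. Hence $\bar X$ solves the classical common noise McKean–Vlasov equation with $\mu_s$ replaced by $\Phi_{T,s}(\mu_0,\widehat W^{0,\mathrm{It\hat o}})$. Because the lift is Itô and not Stratonovich, no correction term appears.

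It remains to check consistency: $\mathcal L(\bar X_t\mid\mathcal F^{W^0}_t)=\Phi_{T,t}(\mu_0,\widehat W^{0,\mathrm{It\hat o}})$. This holds by Fubini, since conditionally on $\omega^0$ the variable $\bar X_t(\omega^0,\cdot)$ has law $\mathcal L(X_t^{\mu_0,w})$ evaluated at $w=\widehat W^{0,\mathrm{It\hat o}}(\omega^0)$. Finally, pathwise uniqueness for the classical conditional equation, assumed under the same Lipschitz hypotheses, identifies $\bar X$ with $X$ and yields the claim. The delicate points I expect to check are joint measurability in $(\omega^0,\omega^1)$ and the compatibility (immersion) of filtrations needed to apply classical uniqueness.
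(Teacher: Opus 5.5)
Your proposal is correct and follows the paper's proof essentially step for step: \cite[Theorem~3.10]{friz2025mckean} for well posedness, an optimal $\mathcal W_m$ coupling of the initial data with a shared idiosyncratic Brownian motion plus \cite[Theorem~3.11]{friz2025mckean} for continuity, uniqueness on $[0,t]$ for causality, and Brownian randomization with \cite[Theorem~3.5]{friz2025randomisation} for the identification. Your Fubini and classical uniqueness sub-steps unpack what the paper leaves to these citations, and the joint measurability you flag is supplied in the paper by the progressively measurable version of the frozen solutions \cite[Proposition~3.16]{friz2025mckean}; you should also state, as the paper does, that uniqueness makes $\mathcal L(X_t^{\mu_0,w})$ independent of the chosen realization, since your continuity argument re-realizes the initial data on a coupling space.
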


\begin{remark}[Moment exponent]
If the cited theory applies with $m=2$, then the source measure space is
$(\mathcal P_2,\mathcal W_2)$. If it requires $m>2$, the natural source is
$(\mathcal P_m,\mathcal W_m)$. Since
$\mathcal W_2(\mu,\nu)\leq\mathcal W_m(\mu,\nu)$ for $m\geq2$, continuity of
the resulting law map still holds with values in
$(\mathcal P_2,\mathcal W_2)$.
\end{remark}

\begin{proof}
Fix an initial law $\mu_0$. Choose a probability space carrying an
idiosyncratic Brownian motion $(B_s)_{s\le T}$ and an independent random
variable $\xi$ with $\mathcal L(\xi)=\mu_0$ and finite $m$th moment. For each
deterministic $w\in C^{0,\alpha}([0,T];\mathbb R^{q+1})$, the
\rev{well posedness} theorem \cite[Theorem~3.10]{friz2025mckean} gives a unique
solution $X^{\mu_0,w}$ in the corresponding $L^m$ controlled
rough path class, with
$\sup_{s\le T}\|X_s^{\mu_0,w}\|_{L^m}<\infty$. The driver need not
be geometric, which permits the It\^o lift.

Since $m\ge2$, the law of $X_t^{\mu_0,w}$ belongs to
$\mathcal P_m(\mathbb R^d)\subset\mathcal P_2(\mathbb R^d)$, and hence
\[
\rev{\Phi_{T,t}}(\mu_0,w):=\mathcal L(X_t^{\mu_0,w})
\]
is well defined.

The restriction of $X^{\mu_0,w}$ to $[0,t]$ solves the same frozen
equation driven by $r_t w$. Uniqueness on $[0,t]$ therefore gives
\[
\rev{\Phi_{T,t}}(\mu_0,w)
=\overline\Phi_t(\mu_0,r_t w).
\]
The restricted map is continuous by the same stability argument used below.

Let $(\mu_0^n,w^n)\to(\mu_0,w)$ in
$\mathcal W_m\times\rho_\alpha$. By the gluing lemma, the initial variables
may be realized on a common probability space, together with an independent
idiosyncratic Brownian motion, so that
\[
\mathcal L(\xi^n)=\mu_0^n,
\qquad
\mathcal L(\xi)=\mu_0,
\qquad
\|\xi^n-\xi\|_{L^m}=\mathcal W_m(\mu_0^n,\mu_0).
\]
The convergent rough path sequence is bounded in rough path norm. The
continuous dependence theorem \cite[Theorem~3.11]{friz2025mckean}, applied
with the fixed coefficients and control, therefore yields
\[
\sup_{s\le T}\|X_s^{\mu_0^n,w^n}
-X_s^{\mu_0,w}\|_{L^m}\longrightarrow0.
\]
Since $m\ge2$, convergence also holds in $L^2$. At time $t$, the chosen
coupling consequently gives
\[
\mathcal W_2\!\left(
\rev{\Phi_{T,t}}(\mu_0^n,w^n),\rev{\Phi_{T,t}}(\mu_0,w)
\right)
\le
\|X_t^{\mu_0^n,w^n}-X_t^{\mu_0,w}\|_{L^2}
\longrightarrow0.
\]
This proves the asserted continuity into
$(\mathcal P_2(\mathbb R^d),\mathcal W_2)$.

Uniqueness for the frozen equation shows that
$\rev{\Phi_{T,t}}(\mu_0,w)$ depends only on the initial law and the
deterministic rough driver, not on their chosen realization.

The Brownian randomization argument of Friz--Hocquet--L\^e
\cite[Section~3.3]{friz2025mckean}, together with the progressively measurable
version of the frozen solutions supplied by
\cite[Proposition~3.16]{friz2025mckean} and the identification theorem
\cite[Theorem~3.5]{friz2025randomisation}, permits evaluation of the frozen
solution at the random It\^o Brownian rough path. The resulting process solves
the classical common noise McKean--Vlasov equation, and
\[
\rev{\Phi_{T,t}}\bigl(\mu_0,
\widehat{W}^{0,\mathrm{It\hat{o}}}(\omega^0)\bigr)
=\mathcal{L}(X_t\mid\mathcal{F}_t^{W^0})(\omega^0),
\qquad \mathbb{P}^0\text{ a.s.}
\]

The identification is formulated for the It\^o lift and coefficients with no
direct dependence on the driver, as assumed here. Consequently, it recovers
the classical It\^o common noise system without a drift correction. This
proves existence, causality, continuity, and the claimed conditional law
identification.
\end{proof}

The proposition keeps the admissible control $a$ fixed. Allowing the control
to vary would require a topology and a regularity class for the controls. If
the control enters the rough coefficient, mere measurability in time is not
enough in general because the rough integrand must satisfy the controlled
rough path regularity used in the stability theorem. For an admissible class
on which that estimate applies, the same argument also gives continuity in
the control variable; see \cite{friz_le_zhang_2024} for the pathwise control
setting.

Under the stronger assumptions of Bugini--Friz--Stannat
\cite{bugini2025nonlinear}, the same law flow is also characterized as the
unique solution of the associated nonlinear rough Fokker--Planck equation.

\begin{remark}[Itô and Stratonovich lifts]
The identification result of \cite[Section~3.3]{friz2025mckean} is formulated
for the It\^o lift of the \rev{time augmented} common noise. Brownian randomization
then recovers the classical It\^o common noise equation without a drift
correction. At the second level,
\[
\widehat{\mathbb W}^{0}_{s,t}
=
\widehat{\mathbb W}^{0,\mathrm{It\hat{o}}}_{s,t}
+\frac12 (t-s)
\begin{pmatrix}
0 & 0 \\
0 & I_q
\end{pmatrix}
\]
and the first levels coincide. The translation is continuous and invertible
on its image, so using the It\^o lift for analytic
identification and the Stratonovich lift for signature features loses no
information. It acts on the driver only; no \rev{coefficient level}
It\^o--Stratonovich correction is applied to $b$, $\sigma$, or $f$ in the
approximation theorem. A coefficient conversion for measure dependent common
noise would additionally involve spatial and Lions derivative terms and lies
outside the present scope. Section~\ref{sec:ito-strat-num} checks the state
dependent case numerically.
\end{remark}

\subsection{Conditional cylindrical networks}

The approximation architecture has two components. The first component learns
the conditional law from finite information about the initial distribution
and the \rev{common noise path}. The second component evaluates the target
functional from the current state and finitely many integrals against the
predicted law. This separation is useful because the same learned law can be
reused for different states $x$ and, after retraining only the second
component, for different functionals $V$.

Let $(\psi_i)_{i\geq1}\subset C_b(\mathbb R^d)$ be a fixed countable family
that separates probability measures. For $L\in\mathbb N$, the initial law is
encoded by
\[
E_L(\mu_0)
=\bigl(\langle\psi_1,\mu_0\rangle,\ldots,
\langle\psi_L,\mu_0\rangle\bigr)\in\mathbb R^L,
\qquad
\langle\psi_i,\mu_0\rangle
=\int_{\mathbb R^d}\psi_i(y)\,\mu_0(dy).
\]
The proof of Theorem~1 uses the sine and cosine functions associated with rational
frequencies, which form one explicit separating family. The information from
the common noise is represented by the truncated signature $S_M(r_t w)$
defined in Section~2.2. Combining the two inputs gives the \rev{finite dimensional}
feature map
\[
g_M(\mu_0,r_t w)
:=\bigl(E_L(\mu_0),S_M(r_t w)\bigr)\in\mathbb R^N,
\qquad
N=L+d_{\mathrm{sig}}(M,q).
\]

The conditional network
\[
\widehat N:\mathbb R^N\longrightarrow\mathcal P_2(\mathbb R^d)
\]
maps these features to a probability measure intended to approximate
$\mu_t(w)=\overline\Phi_t(\mu_0,r_t w)$. In the construction below it has a
finite Gaussian mixture output of the form
\[
\widehat N(z)
=\sum_{j=1}^{J}\pi_j(z)\,
\mathcal N\bigl(m_j(z),\Sigma_j(z)\bigr),
\qquad z\in\mathbb R^N,
\]
where $\pi_j(z)\geq0$, $\sum_{j=1}^{J}\pi_j(z)=1$, and each
$\Sigma_j(z)$ is positive definite. Thus the network does not return a single
point estimate of the state. It returns an entire probability law, allowing
multimodality, conditional variances, and distributional asymmetry to be
retained before the functional is evaluated.

The proof of Lemma~1 uses only the subclass in which the Gaussian means and
covariances are fixed and the mixture weights depend on $z$ through a neural
network. This subclass is sufficient for the density argument. The numerical
implementation uses the richer standard parametrization above, in which the
weights, means, and covariance matrices all depend on the input features.

The predicted law is then processed by a cylindrical network in the sense of
Pham and Warin \cite{pham2022meanfield}. For a latent dimension
$k\in\mathbb N$, let
$\varphi=(\varphi_1,\ldots,\varphi_k):\mathbb R^d\to\mathbb R^k$ be bounded
and Lipschitz, and define componentwise
\[
\langle\varphi,\mu\rangle
:=\int_{\mathbb R^d}\varphi(y)\,\mu(dy)
=\bigl(\langle\varphi_1,\mu\rangle,\ldots,
\langle\varphi_k,\mu\rangle\bigr)\in\mathbb R^k.
\]
Boundedness makes these integrals well defined for every probability measure,
while the Lipschitz property makes the map continuous in the Wasserstein
topology. Indeed, for each component,
\[
\bigl|\langle\varphi_i,\mu\rangle
-\langle\varphi_i,\nu\rangle\bigr|
\leq \operatorname{Lip}(\varphi_i)\,\mathcal W_1(\mu,\nu)
\leq \operatorname{Lip}(\varphi_i)\,\mathcal W_2(\mu,\nu).
\]

Finally, an outer feedforward network
$\Psi:\mathbb R^d\times\mathbb R^k\to\mathbb R^p$ combines the state $x$
with these law statistics. The resulting approximating class is
\[
\widehat{V}(x,\mu_0,w)
=\Psi\Bigl(x,\langle\varphi,
\widehat{N}(g_M(\mu_0,r_t w))\rangle\Bigr).
\]
Writing
$\widehat\mu_t(\mu_0,w):=\widehat N(g_M(\mu_0,r_t w))$, its information
flow can be summarized as
\[
(\mu_0,r_t w)
\xrightarrow{\;g_M\;}
\mathbb R^N
\xrightarrow{\;\widehat N\;}
\widehat\mu_t
\xrightarrow{\;\langle\varphi,\cdot\rangle\;}
\mathbb R^k
\xrightarrow{\;\Psi(x,\cdot)\;}
\widehat V(x,\mu_0,w).
\]
The feature orders $L$ and $M$, the number of mixture components $J$, the
latent dimension $k$, and the network sizes are allowed to depend on the
desired accuracy. The theorem below is qualitative and does not assert a
rate for any of these quantities.

Signature encodings of the common noise have recently been used in
numerical schemes for conditional McKean--Vlasov dynamics: Agram and Rems
\cite{agram2025conditional} combine neural networks with path signatures for the
estimation of conditional expectations in the control of conditional
McKean--Vlasov jump diffusions, while Hu, Jin, Laurière and Zhang
\cite{hu2025deepsignature} represent the conditional law through the truncated
log signature of the \rev{time augmented} common noise within a fictitious play
scheme for McKean--Vlasov FBSDEs. The present work is complementary: it
provides a universal approximation guarantee for a single conditional network
class of the form \(\widehat V\) above, the continuity input being supplied by
the rough path stability theory of \cite{friz2025mckean}.

\section{Conditional cylindrical universal approximation}

This section establishes the main universal approximation result. The theorem
shows that continuous \rev{square integrable} functionals of the state and the
conditional law can be approximated in the relevant $L^2$ space by conditional
cylindrical networks. Its proof combines compact truncation, finite Fourier and
signature representations, Gaussian mixture approximation of the conditional
law, and cylindrical approximation of the target functional. The argument is
qualitative and does not provide approximation rates.

\rev{We first establish the two compact approximation lemmas used in the proof.}

\medskip
\noindent\emph{Finite feature factorization.}
Recall from Section~2.5 that
\[
g_M(\mu_0,\mathbf v)
=\bigl(E_L(\mu_0),S_M(\mathbf v)\bigr)\in\mathbb R^N,
\qquad N=L+d_{\mathrm{sig}}(M,q).
\]
For completeness, the separating family used in $E_L$ can be chosen by
enumerating the rational frequencies
$\{k_n\}_{n\geq1}\subset\mathbb Q^d$ and setting
\[
\psi_{2n-1}(x)=\cos\langle k_n,x\rangle,
\qquad
\psi_{2n}(x)=\sin\langle k_n,x\rangle.
\]
These functions are bounded and Lipschitz. Equality of their integrals for
all $i$ gives equality of the characteristic functions on $\mathbb Q^d$ and
hence, by continuity and the uniqueness theorem for characteristic functions
\cite[Chapter~5, Section~26]{billingsley2008probability}, equality of the
measures.

\begin{lemma}[Finite factorization and MDN realization]
Let
\(
K_\mu\subset\mathcal P_m(\mathbb R^d),
\qquad
K_\omega^t
\subset
C^{0,\alpha}_g([0,t];\mathbb R^{q+1})
\)
be compact, and assume that every element of $K_\omega^t$ is
\rev{time augmented}, i.e. its first coordinate is $s\mapsto s$. Set
\(
\mathcal X_t:=K_\mu\times K_\omega^t,
\qquad
d_{\mathcal X_t}
:=
\mathcal W_m+\rho_\alpha.
\)
For every $\theta>0$, there exist $L,M\in\mathbb N$ and a conditional
network
\(
\widehat{N}:\mathbb R^N\to\mathcal P_2(\mathbb R^d)
\)
with finite Gaussian mixture output such that
\[
\sup_{(\mu_0,\mathbf v)\in\mathcal X_t}
\mathcal W_2\left(
\overline{\Phi}_t(\mu_0,\mathbf v),
\widehat{N}(g_M(\mu_0,\mathbf v))
\right)
\leq\theta.
\]
Moreover, the image $\widehat N(g_M(\mathcal X_t))$ is  compact in
$(\mathcal P_2(\mathbb R^d),\mathcal W_2)$, with second moments uniformly
bounded.
\end{lemma}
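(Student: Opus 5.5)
The plan is a compactness argument with three stages: injectivity of the infinite feature map, uniform factorization through finitely many features, and realization of the resulting law-valued map by a Gaussian mixture network with fixed components.

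\emph{Injectivity of the infinite features.} The full feature map $G(\mu_0,\mathbf v)=\bigl((\langle\psi_i,\mu_0\rangle)_{i\ge1},(\mathbf v^{(j)}_{0,t})_{j\ge1}\bigr)$ is injective on $\mathcal X_t$. The Fourier family separates measures, as recalled just before the lemma. The signature is injective on time augmented geometric rough paths by Boedihardjo--Geng--Lyons--Yang. Each coordinate map is continuous on $\mathcal X_t$: the Fourier coordinates because $\psi_i$ is bounded and Lipschitz and $\mathcal W_1\le\mathcal W_m$, and the signature coordinates by the continuity of $S_M$ from Section~2.2.

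\emph{Finite factorization.} Set $F=\overline\Phi_t$ on $\mathcal X_t$. By Proposition~1 it is continuous, hence uniformly continuous on the compact metric space $\mathcal X_t$. Its image $F(\mathcal X_t)$ is compact in $\mathcal W_2$; in particular it is uniformly $2$-integrable, with bounded second moments. I first want $L,M$ and $\delta>0$ such that $|g_M(x)-g_M(y)|<\delta$ forces $\mathcal W_2(F(x),F(y))<\theta/3$. I would argue by contradiction. Otherwise there are $L_n=M_n=n$ and pairs $x_n,y_n$ with $|g_n(x_n)-g_n(y_n)|<1/n$ but $\mathcal W_2(F(x_n),F(y_n))\ge\theta/3$. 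By compactness, pass to a subsequence with $x_n\to x$ and $y_n\to y$. For each fixed coordinate $i$, the $i$-th coordinates of $G(x_n)$ and $G(y_n)$ eventually differ by less than $1/n$, so by continuity $G(x)=G(y)$. Injectivity gives $x=y$, and continuity of $F$ gives $\mathcal W_2(F(x_n),F(y_n))\to0$, a contradiction. The one point to watch is that $g_M$ increases both $L$ and $M$, so the Fourier and signature coordinates must be indexed jointly. With this $\delta$, $F=h\circ g_M$ for a map $h$ defined on the compact set $C=g_M(\mathcal X_t)\subset\mathbb R^N$. The same argument shows that $h$ is uniformly continuous: it is well defined up to $\theta/3$ oscillation, and I would make it an honest continuous map by a quotient argument. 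Concretely, if $g_M(x)=g_M(y)$ then $\mathcal W_2(F(x),F(y))<\theta/3$, so I choose a measurable selection and smooth it. I expect this to be the main obstacle: exact factorization only holds in the limit, so one works with $\theta$-approximate factorization throughout.

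\emph{Gaussian mixture realization.} To avoid needing a genuinely continuous $h$, I work directly with fixed Gaussians. Cover the compact set $F(\mathcal X_t)$ by finitely many $\mathcal W_2$-balls of radius $\theta/3$. Gaussian mixtures are $\mathcal W_2$-dense, so approximate each centre, and then combine: using a partition of unity on $C$ subordinate to the $\delta$-cover, the map $z\mapsto\sum_j\pi_j(z)\,\mathcal N(m_j,\epsilon^2 I)$ has continuous weights $\pi_j$ on $C$. Here the atoms $\mathcal N(m_j,\epsilon^2I)$ are chosen so that the mixture $\sum_j\pi_j\mathcal N(m_j,\epsilon^2I)$ with weights from a quantization of nearby $F$-values is $\theta/3$-close. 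I use convexity, $\mathcal W_2^2(\sum\lambda_i\mu_i,\sum\lambda_i\nu_i)\le\sum\lambda_i\mathcal W_2^2(\mu_i,\nu_i)$, together with the fact that on a uniformly integrable compact set, quantizing measures to a fixed finite grid of atoms is uniformly $\mathcal W_2$-accurate. Concretely, map each measure to its grid quantization by a fixed partition of a large ball. The weights $\pi_j$ are then continuous functions of $z$ on $C$, approximated uniformly by a softmax of a feedforward network (Hornik). Small uniform weight errors cost little in $\mathcal W_2$, since $\mathcal W_2^2$ is bounded by the total variation times the squared diameter of the fixed support. Finally, compactness of the image $\widehat N(g_M(\mathcal X_t))$ follows because it is a continuous image of a compact set: a softmax network composed with the continuous map $g_M$. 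Second moments are uniformly bounded by $\max_j(|m_j|^2+d\epsilon^2)$.
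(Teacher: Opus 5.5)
Your proposal is correct and follows essentially the paper's proof. Injectivity of the full Fourier--signature feature map together with compactness yields a uniform $\delta$ such that $|g_M(x)-g_M(y)|<\delta$ forces $\mathcal W_2(\overline\Phi_t(x),\overline\Phi_t(y))$ to be small; a partition of unity on $g_M(\mathcal X_t)$ subordinate to a $\delta$-cover then mixes Gaussian-mixture approximations of $\overline\Phi_t$ at finitely many nodes, convexity of $\mathcal W_2^2$ controls the mixing error, and a softmax network approximates the weights. The only differences are minor: you obtain $\delta$ by a sequential contradiction instead of through the uniformly continuous inverse $g_\infty^{-1}$, and you quantize all nodal laws onto one common grid with equal-variance Gaussian components instead of choosing separate nodal mixtures $m_j$. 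Your intermediate claim of an exact factorization $F=h\circ g_M$ (with a ``measurable selection'') is false as stated, since only a $\theta/3$-approximate factorization holds, but your final construction never uses $h$, so that detour should simply be deleted.
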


\begin{proof}
Write
\[
g_\infty=(E_\infty,S_\infty):
\mathcal X_t\longrightarrow\mathbb R^{\mathbb N}
\]
for the concatenation of all Fourier moments of $\mu_0$ and all signature
coordinates of $\mathbf v$. Choose the enumeration compatibly with the
finite blocks, so that for every pair $(L,M)$ an initial block consists of
\[
(E_L(\mu_0),S_M(\mathbf{v})) = g_M(\mu_0,\mathbf{v})
\]
with $N=L+d_{\mathrm{sig}}(M,q)$. Equip
\(\mathbb R^{\mathbb N}\) with the product metric
\[
\rho(a,b)
:=
\sum_{j\geq 1}2^{-j}\bigl(|a_j-b_j|\wedge 1\bigr).
\]

\smallskip

\noindent\emph{The full feature map.}
Each map
\[
\mu_0 \longmapsto \langle \psi_i,\mu_0\rangle
\]
is $\mathcal W_m$ continuous, since $\psi_i$ is bounded and Lipschitz and
$\mathcal W_1\leq\mathcal W_m$;
each signature coordinate is \(\rho_\alpha\) continuous, by continuity of the
signature in the rough path topology. Hence \(g_\infty\) is continuous and,
since \(\mathcal X_t\) is compact,
\[
G_\infty := g_\infty(\mathcal X_t)
\]
is compact in \((\mathbb R^{\mathbb N},\rho)\).

The map $g_\infty$ is injective: $E_\infty$ is injective because the
Fourier family separates probability measures, while $S_\infty$ is
injective on the class of \rev{time augmented} geometric rough paths over
$[0,t]$. A continuous bijection from a compact space to a Hausdorff space is
a homeomorphism onto its image
\cite[Section~26, Theorem~26.6]{munkres2000topology}; hence
\[
g_\infty^{-1}:G_\infty\longrightarrow \mathcal X_t
\]
is continuous. Since \(G_\infty\) is compact, \(g_\infty^{-1}\) is uniformly
continuous. We denote by \(\omega_*\) a modulus of continuity of
\(g_\infty^{-1}\).

\smallskip

\noindent\emph{Finite truncation.}
If $x,x'\in\mathcal X_t$ satisfy $\delta\leq1$ and
\[
|g_M(x)-g_M(x')|_\infty \leq \delta,
\]
then
\[
\rho(g_\infty x,g_\infty x')
\leq
\delta \sum_{j\leq N}2^{-j}
+
\sum_{j>N}2^{-j}
\leq
\delta + 2^{-N}.
\]
Therefore, by uniform continuity of $g_\infty^{-1}$,
\begin{equation}
d_{\mathcal X_t}(x,x')
\leq
\omega_*(\delta+2^{-N}).
\label{eq:factorization}
\end{equation}
Thus $\overline{\Phi}_t$ factors approximately through \(g_M\): if the finite variables
are close, then their preimages are close in \(\mathcal X_t\), and hence the laws
\(\overline{\Phi}_t(\cdot)\) are close by continuity of \(\overline{\Phi}_t\).

\smallskip

\noindent\emph{A finite family of nodal mixtures.}
By \eqref{eq:factorization}, we can fix $N$, and hence $L,M$, and
a radius \(r>0\) such that
\begin{equation}
|g_M(x)-g_M(x')|_\infty \leq r
\quad\Longrightarrow\quad
\mathcal W_2\bigl(\overline\Phi_t(x),\overline\Phi_t(x')\bigr)
\leq \frac{\theta}{4}.
\label{eq:local-law-error}
\end{equation}
Indeed, $\overline\Phi_t$ is uniformly continuous on the compact metric space
$(\mathcal X_t,d_{\mathcal X_t})$. Let $\omega_\Phi$ denote a modulus of
continuity. Combining it with \eqref{eq:factorization}, if
\(
\bigl\|g_M(x)-g_M(x')\bigr\|_\infty \leq \delta,
\)
then
\( 
\mathcal W_2\bigl(\overline\Phi_t(x),\overline\Phi_t(x')\bigr)
\leq
\omega_\Phi\!\left(\omega_*\bigl(\delta+2^{-N}\bigr)\right).
\)
It therefore suffices to pick \(N\) of the admissible form so large that
\( 
\omega_\Phi\!\left(\omega_*(2^{-N+1})\right)
\leq \rev{\frac{\theta}{4}},
\)
and to set \(r:=2^{-N}\).

The compact set
\[
G_N := g_M(\mathcal X_t)\subset \mathbb R^N
\]
is covered by finitely many balls
\[
B(z_1,r),\ldots,B(z_J,r),
\qquad z_j\in G_N.
\]
Choose $x_j\in\mathcal X_t$ such that $g_M(x_j)=z_j$. Finitely supported
probability measures are dense in
$(\mathcal P_2(\mathbb R^d),\mathcal W_2)$
\cite[Theorem~6.18]{villani_ot}. Moreover, if
\[
\nu=\sum_{\ell=1}^{K}p_\ell\delta_{y_\ell},
\qquad
\nu_\varepsilon
=
\sum_{\ell=1}^{K}p_\ell
\mathcal N(y_\ell,\varepsilon^2I_d),
\]
then coupling each atom $y_\ell$ with
$y_\ell+\varepsilon Z$, where $Z\sim\mathcal N(0,I_d)$, gives
\begin{equation}
\mathcal W_2(\nu,\nu_\varepsilon)
\leq
\left(\mathbb E|\varepsilon Z|^2\right)^{1/2}
=\sqrt d\,\varepsilon.
\label{eq:gaussian-smoothing}
\end{equation}
Approximating $\overline\Phi_t(x_j)$ first by such a measure $\nu$ and then
using \eqref{eq:gaussian-smoothing} with sufficiently small $\varepsilon$
shows that finite Gaussian mixtures are dense in $\mathcal P_2(\mathbb R^d)$.
Thus we may choose mixtures $m_j$ satisfying
\[
\mathcal W_2\bigl(m_j,\overline\Phi_t(x_j)\bigr)
\leq \frac{\theta}{4}.
\]

Since
\[
\overline\Phi_t(x_j)\in\overline\Phi_t(\mathcal X_t)=:K
\]
and \(K\) is compact, writing
\[
C_K := \sup_{\mu\in K}\int_{\mathbb R^d}|x|^2\,\mu(dx)<\infty,
\]
we have
\[
\left(\int_{\mathbb R^d}|x|^2\,m_j(dx)\right)^{1/2}
=
\mathcal W_2(m_j,\delta_0)
\leq
\mathcal W_2\bigl(m_j,\overline\Phi_t(x_j)\bigr)
+
\mathcal W_2\bigl(\overline\Phi_t(x_j),\delta_0\bigr)
\leq
\rev{\frac{\theta}{4}}+\sqrt{C_K}.
\]
Hence
\[
\sup_{1\leq j\leq J}
\int_{\mathbb R^d}|x|^2\,m_j(dx)
\leq
\left(\rev{\frac{\theta}{4}}+\sqrt{C_K}\right)^2.
\]
Thus the measures \(m_j\) have uniformly bounded second moments.

\smallskip

\noindent\emph{Construction of a continuous factor.}
For $z$ in a neighbourhood of $G_N$, define
\[
\eta_j(z):=\bigl(r-|z-z_j|\bigr)_+,
\qquad
\chi_j(z):=\frac{\eta_j(z)}{\sum_{k=1}^J\eta_k(z)}.
\]
The denominator is positive on an open neighbourhood of $G_N$. Hence the
$\chi_j$ are continuous, sum to one, and satisfy
$\chi_j(z)>0$ only when $|z-z_j|<r$. Define
\[
F(z)
:=
\sum_{j=1}^J \chi_j(z)m_j.
\]
This is a convex combination of finitely many Gaussian mixtures, hence is
again a finite Gaussian mixture. Its image has uniformly bounded second
moments.

\smallskip

\noindent
\emph{Continuity.}
The square Wasserstein distance is convex in the mixture sense: for weights
\(\lambda,\lambda'\) on the same finite set of components,
\begin{equation}
\mathcal W_2^2
\left(
\sum_{j=1}^J \lambda_j m_j,
\sum_{j=1}^J \lambda'_j m_j
\right)
\leq
D^2 \frac{1}{2}\|\lambda-\lambda'\|_1,
\qquad
D:=\max_{j,k}\mathcal W_2(m_j,m_k)<\infty.
\label{eq:mixture-continuity}
\end{equation}
Since the functions $\chi_j$ are continuous, this estimate shows that $F$ is
continuous.

\smallskip

\noindent
\emph{Accuracy.}
Let \(z=g_M(x)\), $x \in \mathcal{X}_t$. Only the indices \(j\) such that \(\chi_j(z)>0\) contribute.
For such \(j\), since the partition is subordinate to the cover, we have
\[
|z-z_j|<r.
\]
Hence, by \eqref{eq:local-law-error},
\[
\mathcal W_2\bigl(\overline\Phi_t(x),\overline\Phi_t(x_j)\bigr)
\leq \rev{\frac{\theta}{4}}.
\]
Therefore, for all such \(j\),
\[
\mathcal W_2\bigl(m_j,\overline\Phi_t(x)\bigr)
\leq
\mathcal W_2\bigl(m_j,\overline\Phi_t(x_j)\bigr)
+
\mathcal W_2\bigl(\overline\Phi_t(x_j),\overline\Phi_t(x)\bigr)
\leq
\rev{\frac{\theta}{4}}
+
\rev{\frac{\theta}{4}}
=
\rev{\frac{\theta}{2}}.
\]
Mixing optimal couplings gives
\begin{equation}
\mathcal W_2\bigl(F(g_M(x)),\overline\Phi_t(x)\bigr)
\leq
\max_{j:\chi_j(z)>0}
\mathcal W_2\bigl(m_j,\overline\Phi_t(x)\bigr)
\leq
\frac{\theta}{2},
\label{eq:law-error}
\end{equation}
uniformly on \(\mathcal X_t\).

\smallskip

\noindent\emph{Realization by a conditional network.}
The means and covariances of the mixtures $m_j$ are fixed, while the weight
map $\chi=(\chi_1,\ldots,\chi_J)$ is continuous on $G_N$. For a small
$a>0$, set
\[
\chi_j^a(z):=\frac{\chi_j(z)+a}{1+Ja}.
\]
The map $z\mapsto(\log\chi_j^a(z))_{j=1}^J$ is continuous. The classical
universal approximation theorem \cite{hornik1991approximation}, followed by
a softmax output layer, therefore gives network weights $\pi_j(z)$ satisfying
\[
\sup_{z\in G_N}\|\pi(z)-\chi(z)\|_1\leq\varepsilon_\lambda,
\]
after first choosing $a$ and then the network approximation sufficiently
small. Estimate \eqref{eq:mixture-continuity} transports this into a
$\mathcal W_2$ error bounded by
\[
\frac{D}{\sqrt{2}}\sqrt{\varepsilon_\lambda}\leq\rev{\frac{\theta}{2}}.
\]
We obtain a conditional network with Gaussian mixture output
\[
\widehat N:\mathbb R^N \longrightarrow \mathcal P_2(\mathbb R^d)
\]
such that, by \eqref{eq:law-error} and the triangle inequality,
\[
\sup_{(\mu_0,\mathbf v)\in \mathcal X_t}
\mathcal W_2
\left(
\overline\Phi_t(\mu_0,\mathbf v),
\widehat N(g_M(\mu_0,\mathbf v))
\right)
\leq \theta.
\]

Finally, \(\widehat N\) is continuous, since it is a network composed with a
continuous parametrization of the mixture. Therefore
\[
\widehat N(g_M(\mathcal X_t))
\]
is the continuous image of the compact set \(g_M(\mathcal X_t)\), hence is compact in
$(\mathcal P_2(\mathbb R^d),\mathcal W_2)$. Its second moments are uniformly
bounded by the bounds for the finite family $(m_j)_{j=1}^J$.
\end{proof}

\medskip
\noindent\emph{Uniform cylindrical approximation.}
The next lemma is the compact set version of the cylindrical approximation
principle of Pham and Warin \cite{pham2022meanfield}.

\begin{lemma}[Uniform cylindrical approximation]
Let \(K_x\subset \mathbb R^d\) and
\(K_\mu\subset \mathcal P_2(\mathbb R^d)\) be compact, and let
\[
G:\mathbb R^d\times \mathcal P_2(\mathbb R^d)\longrightarrow \mathbb R^p
\]
be continuous. For every \(\beta>0\), there exist \(k\in\mathbb N\), inner
functions
\[
\varphi=(\varphi_1,\ldots,\varphi_k),
\qquad
\varphi_i\in C_b(\mathbb R^d),
\]
bounded and Lipschitz, and an outer network
\[
\Psi:\mathbb R^d\times \mathbb R^k\longrightarrow \mathbb R^p
\]
such that
\[
\sup_{(x,\mu)\in K_x\times K_\mu}
\left|
G(x,\mu)-\Psi\bigl(x,\langle \varphi,\mu\rangle\bigr)
\right|
\leq \beta .
\]
\end{lemma}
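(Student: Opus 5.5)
We follow the Pham--Warin strategy and reduce everything to finite-dimensional features, as in the proof of Lemma~1. Let $(\varphi_i)_{i\ge1}$ be the Fourier family $\psi_i$ used for $E_L$. It is bounded and Lipschitz and separates probability measures. Define the full feature map $h_\infty(x,\mu):=\bigl(x,(\langle\varphi_i,\mu\rangle)_{i\ge1}\bigr)\in\mathbb R^d\times\mathbb R^{\mathbb N}$, where $\mathbb R^{\mathbb N}$ carries the product metric $\rho$ from Lemma~1.

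\emph{Step 1: continuity and injectivity.} Each coordinate $\mu\mapsto\langle\varphi_i,\mu\rangle$ is $\mathcal W_2$-continuous, because $\mathcal W_1\le\mathcal W_2$. Hence $h_\infty$ is continuous on the compact set $K:=K_x\times K_\mu$. It is injective because the family separates measures. By the compact-to-Hausdorff argument, $h_\infty$ is a homeomorphism onto its compact image $H_\infty$. Consequently $G\circ h_\infty^{-1}$ is uniformly continuous on $H_\infty$, with some modulus $\omega_G$.

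\emph{Step 2: truncation.} Set $h_k(x,\mu):=(x,\langle\varphi,\mu\rangle)\in\mathbb R^{d+k}$, where $\varphi=(\varphi_1,\dots,\varphi_k)$. Repeat the estimate \eqref{eq:factorization}. If $|h_k(y)-h_k(y')|_\infty\le\delta$, then the $\rho$-distance between the full feature vectors is at most $\delta+2^{-k}$, and so $|G(y)-G(y')|\le\omega_G(\delta+2^{-k})$. Choose $k$ and then $r>0$ so that $|G(y)-G(y')|\le\beta/3$ whenever $|h_k(y)-h_k(y')|_\infty\le r$.

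\emph{Step 3: a continuous finite-dimensional factor.} We build a function $g$ on the compact set $H_k:=h_k(K)\subset\mathbb R^{d+k}$ with $|g(h_k(y))-G(y)|\le\beta/3$ on $K$. This is the delicate point, since $G$ need not factor exactly through $h_k$. Use the same partition-of-unity construction as in Lemma~1. Cover $H_k$ by finitely many balls $B(z_j,r)$ with $z_j=h_k(y_j)$. Take the subordinate continuous weights $\chi_j$ and define $g(z):=\sum_j\chi_j(z)G(y_j)$. For $z=h_k(y)$, every $j$ with $\chi_j(z)>0$ satisfies $|z-z_j|<r$, hence $|G(y_j)-G(y)|\le\beta/3$, and the convex combination inherits this bound. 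Then extend $g$ continuously to $\mathbb R^{d+k}$ by Tietze.

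\emph{Step 4: network approximation.} Apply the universal approximation theorem \cite{hornik1991approximation} on the compact set $H_k$ to obtain a feedforward $\Psi$ with $\sup_{H_k}|\Psi-g|\le\beta/3$. The triangle inequality then gives the total error $\le 2\beta/3\le\beta$.

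The main obstacle is Step~3: controlling the error from discarding the coordinates beyond $k$. It is handled exactly as in Lemma~1, through uniform continuity of the inverse feature map on the compact image combined with the $2^{-k}$ tail of the product metric.
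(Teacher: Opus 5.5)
Your proof is correct, but it takes a different route from the paper. The paper does not reuse the homeomorphism and truncation argument of Lemma~1. It applies Stone--Weierstrass directly to the unital algebra $\mathcal A$ of functions $P(x,\langle\vartheta_1,\mu\rangle,\ldots,\langle\vartheta_n,\mu\rangle)$, with $P$ a polynomial and $\vartheta_i$ taken from the Fourier family. This algebra separates points of $K_x\times K_\mu$: coordinate functions separate states, and Fourier integrals separate measures. Density gives a vector polynomial with error $\beta/2$, and the polynomial is then replaced by a network on the compact set $K_x\times\langle\varphi_0,K_\mu\rangle$.

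You avoid the algebra structure and build the finite-dimensional factor by hand. Injectivity plus compactness gives uniform continuity of $G\circ h_\infty^{-1}$. The $2^{-k}$ tail of $\rho$ then shows that $G$ varies by at most $\beta/3$ between points whose $h_k$-images are $r$-close. The partition of unity turns this into a continuous $g$ with $\sup_{K}|G-g\circ h_k|\le\beta/3$.

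The paper's argument is shorter, and its intermediate factor is a globally defined polynomial, so no cover or extension step is needed. Your Tietze step is in fact superfluous, since the universal approximation theorem is only applied on $H_k$. Your version has three advantages:
\begin{itemize}
\item it rests on exactly the same mechanism as Lemma~1, so both lemmas run on one argument;
\item it shows that an initial segment $\varphi_1,\ldots,\varphi_k$ of the fixed enumeration suffices;
\item it makes the error budget explicit in terms of $\omega_G$ and $2^{-k}$, which is the natural starting point for quantitative bounds.
\end{itemize}

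One small bookkeeping point: the full feature vector also contains $x$. You should therefore equip $\mathbb R^d\times\mathbb R^{\mathbb N}$ with a product metric, for instance $|x-x'|+\rho$. Your Step~2 bound then becomes $C_d\,\delta+2^{-k}$ rather than $\delta+2^{-k}$, which does not affect the argument.
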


\begin{proof}
First consider cylindrical functions generated by the Fourier family. Let
$\mathcal D=\{\psi_i\}_{i\geq1}$ be the Fourier family, which is
bounded, Lipschitz, and separates points of
\(\mathcal P_2(\mathbb R^d)\). For \(\vartheta\in\mathcal D\), the map
\[
\mu\longmapsto \langle \vartheta,\mu\rangle
\]
is \(\mathcal W_2\) continuous. Indeed, by the Kantorovich--Rubinstein
duality \cite[Theorem~1.14]{villani_topics_ot},
\[
\left|
\langle \vartheta,\mu\rangle-\langle \vartheta,\nu\rangle
\right|
\leq
\operatorname{Lip}(\vartheta)\mathcal W_1(\mu,\nu)
\leq
\operatorname{Lip}(\vartheta)\mathcal W_2(\mu,\nu).
\]

Consider the set \(\mathcal A\) of functions of the form
\[
(x,\mu)
\longmapsto
P\bigl(x,\langle \vartheta_1,\mu\rangle,\ldots,
          \langle \vartheta_n,\mu\rangle\bigr),
\]
where
\[
n\in\mathbb N,\qquad
\vartheta_i\in\mathcal D,
\qquad
P \text{ is a polynomial}.
\]
The set \(\mathcal A\) is a subalgebra of
\(C(K_x\times K_\mu)\) containing the constants. It separates points:
if
\[
(x,\mu)\neq (x',\mu'),
\]
then either \(x\neq x'\), in which case the points are separated by one
coordinate function \(x_i\), or \(\mu\neq \mu'\), in which case there exists
\(\vartheta_i\in\mathcal D\) such that
\[
\langle \vartheta_i,\mu\rangle
\neq
\langle \vartheta_i,\mu'\rangle .
\]
Therefore, by the Stone--Weierstrass theorem
\cite[Chapter~7, Theorem~7.32]{rudin_pma}, $\mathcal A$ is dense in
$C(K_x\times K_\mu)$ for the uniform norm. Applying this componentwise to
$G$ and taking the union of the finitely many test functions involved, we may
choose a vector valued polynomial $P$ and
$\vartheta_1,\ldots,\vartheta_n$ such that
\[
\sup_{(x,\mu)\in K_x\times K_\mu}
\left|
G(x,\mu)
-
P\bigl(x,\langle \varphi_0,\mu\rangle\bigr)
\right|
\leq
\frac{\beta}{2},
\]
where
\[
\varphi_0:=(\vartheta_1,\ldots,\vartheta_n)
\in C_b(\mathbb R^d;\mathbb R^n)
\]
is bounded and Lipschitz.

It remains to replace the polynomial by a network. The set
\[
K_x\times \langle \varphi_0,K_\mu\rangle
\subset
\mathbb R^d\times \mathbb R^n
\]
is compact, since it is the continuous image of a compact set. Moreover,
\[
|\langle \varphi_0,\mu\rangle|
\leq
\|\varphi_0\|_\infty .
\]
The polynomial \(P\) is continuous on this compact set. Therefore, by the
classical \rev{finite dimensional} universal approximation theorem
\cite{hornik1991approximation}, \rev{using a Lipschitz nonconstant activation},
there exists a
neural network
\[
\Psi:\mathbb R^d\times \mathbb R^n\longrightarrow \mathbb R^p
\]
such that
\[
\sup_{(x,z)\in K_x\times \langle \varphi_0,K_\mu\rangle}
|P(x,z)-\Psi(x,z)|
\leq
\frac{\beta}{2}.
\]
Setting
\[
\varphi:=\varphi_0,
\]
we obtain, for every \((x,\mu)\in K_x\times K_\mu\),
\[
\left|
G(x,\mu)
-
\Psi\bigl(x,\langle \varphi,\mu\rangle\bigr)
\right|
\leq
\beta.
\]
This proves the lemma.
\end{proof}

\medskip

\begin{theorem}[Conditional cylindrical universal approximation]
\label{thm:main}
Fix $t\in(0,T]$ and let $m\geq2$ be as in Proposition~1. Let $\nu$ be a
probability measure on $\mathcal P_m(\mathbb R^d)$, and let
\[
V:\mathbb R^d\times\mathcal P_2(\mathbb R^d)\longrightarrow\mathbb R^p
\]
be continuous and satisfy the mean square integrability condition of
Section~2.3. For
\[
\mu_t(\mathbf w):=\overline\Phi_t(\mu_0,r_t\mathbf w),
\]
where $\overline\Phi_t$ is the restricted conditional law map of
Proposition~1, the following holds. For every $\varepsilon>0$, there exist
$L,M\in\mathbb N^*$, a conditional network $\widehat N$, a latent dimension
$k$, bounded Lipschitz inner functions $\varphi$, and an outer network $\Psi$
such that
\[
\int_{\mathcal P_m(\mathbb R^d)}
\int_{C_g^{0,\alpha}([0,T];\mathbb R^{q+1})}
\int_{\mathbb{R}^d}
\Bigl|V(x,\mu_t(\mathbf w))
-\Psi\bigl(x,\langle\varphi,
\widehat N(g_M(\mu_0,r_t\mathbf w))\rangle\bigr)\Bigr|^2
\,\mu_t(\mathbf w)(dx)\,\mathbf P^0(d\mathbf w)\,\nu(d\mu_0)
\leq\varepsilon,
\]
where
\[
g_M(\mu_0,r_t\mathbf w)
=\bigl(E_L(\mu_0),S_M(r_t\mathbf w)\bigr)\in\mathbb R^N,
\qquad N=L+d_{\mathrm{sig}}(M,q).
\]
\end{theorem}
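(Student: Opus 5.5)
The plan is a compact truncation argument followed by the two lemmas. Write $\Gamma:=\nu\otimes\mathbf P^0$ on $\mathcal P_m(\mathbb R^d)\times C_g^{0,\alpha}([0,T];\mathbb R^{q+1})$. Consider also the measure $\Lambda(d\mu_0,d\mathbf w,dx):=\mu_t(\mathbf w)(dx)\,\mathbf P^0(d\mathbf w)\,\nu(d\mu_0)$. Both base spaces are Polish, so $\Gamma$ is tight. We may also assume $\mathbf P^0$ is concentrated on time augmented paths, since the first coordinate of the Stratonovich lift is $s\mapsto s$. Fix $\eta>0$ and pick compact sets $K_\mu\subset\mathcal P_m$ and $K_\omega\subset C_g^{0,\alpha}$ of time augmented paths with $\Gamma\bigl((K_\mu\times K_\omega)^c\bigr)\le\eta$. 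Then $K_\omega^t:=r_t(K_\omega)$ is compact by the $1$-Lipschitz property of $r_t$, and $\mathcal X_t:=K_\mu\times K_\omega^t$ is compact. By continuity of $\overline\Phi_t$ (Proposition~1), the set $K:=\overline\Phi_t(\mathcal X_t)$ is compact in $\mathcal P_2$. Compact sets in $\mathcal W_2$ are uniformly $2$-integrable, so there is a radius $R$ with
\[
\sup_{\mu\in K}\mu\bigl(\{|x|>R\}\bigr)\le\eta .
\]
Set $K_x:=\overline B(0,R)$. The good set $A:=\{(\mu_0,\mathbf w,x):(\mu_0,\mathbf w)\in K_\mu\times K_\omega,\ |x|\le R\}$ then has $\Lambda(A^c)\le2\eta$.

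On the good set, apply Lemma~1 with accuracy $\theta$ to obtain $L,M,\widehat N$. The compact image $\widehat K:=\widehat N(g_M(\mathcal X_t))$ has second moments bounded by a constant $C$ independent of $\theta\le1$. Next apply Lemma~2 to $G=V$ on $K_x\times(K\cup\widehat K')$ with tolerance $\beta$. The approximating image depends on $\theta$, so take $\widehat K'$ to be the closed $1$-neighbourhood of $K$ intersected with a compact set. Concretely, apply Lemma~2 on $K_x\times K$ only, and transfer the estimate to the predicted laws through continuity of $\mu\mapsto\Psi(x,\langle\varphi,\mu\rangle)$. The pointwise error on $A$ splits as
\[
|V(x,\mu_t)-\Psi(x,\langle\varphi,\mu_t\rangle)|+|\Psi(x,\langle\varphi,\mu_t\rangle)-\Psi(x,\langle\varphi,\widehat\mu_t\rangle)|.
\]
The first term is at most $\beta$. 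For the second term, $\varphi$ is Lipschitz, so $|\langle\varphi,\mu_t\rangle-\langle\varphi,\widehat\mu_t\rangle|\le\mathrm{Lip}(\varphi)\theta$. Moreover $\Psi$ is uniformly continuous on the compact set $K_x\times\{|z|\le\|\varphi\|_\infty\}$. Hence, after fixing $\varphi,\Psi$ from Lemma~2 first, we may choose $\theta$ in Lemma~1 small enough that the second term is at most $\beta$. The order of choices therefore matters: compacts first, then $(\varphi,\Psi)$, then $(L,M,\widehat N)$. The contribution of $A$ to the integral is at most $4\beta^2$.

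The main obstacle is the tail region $A^c$, where neither $V$ nor $\widehat V$ is controlled pointwise. I will use $|a-b|^2\le2|a|^2+2|b|^2$. The term $\int_{A^c}|V|^2\,d\Lambda$ is small by absolute continuity of the integral, because $|V|^2\in L^1(\Lambda)$ by the $\Gamma_2$ hypothesis and $\Lambda(A^c)\le2\eta$, with $\eta$ chosen via the modulus of uniform integrability. For $\widehat V$, the outer network $\Psi$ need not be bounded, so I will truncate: replace $\Psi$ by $\Psi$ clipped componentwise at level $\|V\|_{\infty,K_x\times K}+1$. Clipping is realizable by two ReLU layers, or more simply is absorbed into the outer network class. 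It does not increase the error on $A$. After clipping, $\int_{A^c}|\widehat V|^2\,d\Lambda\le C'\Lambda(A^c)$, where $C'$ depends only on $V$ on the compact set and not on $\eta$. This dependence is circular in principle, since $K$ depends on $\eta$. The fix is to avoid clipping at an $\eta$-dependent level. Instead, clip at the level $c:=\sup_{K_x\times K}|V|+1$ and bound the tail by $c^2\Lambda(A^c)$. Since $c$ grows with the compacts, I will make the tail genuinely small another way: the set $A^c\cap\{|V|\le c\}$ contributes at most $4c^2\Lambda(A^c)$. I will therefore pick the compacts so that $\Lambda(A^c)\le\eta/(1+c^2)$ after an iterative choice. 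I expect this to require choosing $\eta$ using a fixed large level $c_0$ where $\int_{\{|V|>c_0\}}|V|^2\,d\Lambda$ is small, and clipping at $c_0$ instead. That is the step I would check most carefully. Collecting the bounds gives a total error of order $\beta^2+\eta$, which is at most $\varepsilon$.
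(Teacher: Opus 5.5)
Your proposal follows the paper's route: tightness of $\nu\otimes\mathbf P^0$, then the compact image $K=\overline\Phi_t(\mathcal X_t)$ and uniform tightness on $\mathbb R^d$, then Lemma~2 on a ball times $K$, then Lemma~1 with $\theta$ chosen after $(\varphi,\Psi)$, and finally clipping and tail bounds. The fix you anticipate in your last paragraph is exactly how the paper breaks the circularity. It first fixes $R$ with $\|V-\pi_R\circ V\|_{L^2}^2\le\varepsilon/8$. Only then does it choose the compacts, with defect $\delta=\varepsilon/(16C_pR^2)$, and it clips the outer network at this fixed $R$.

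\textbf{One loose end.} Once you clip at a fixed $c_0$ rather than at $\sup_{K_x\times K}|V|+1$, your claim that clipping does not increase the error on $A$ fails at points where $|V|>c_0$. The paper avoids this by applying Lemma~2 to $V_{c_0}=\pi_{c_0}\circ V$, which the clipping leaves invariant, and paying $\|V-V_{c_0}\|_{L^2}$ separately. Alternatively, bound $|V-\widehat V|^2\le 2(1+p)|V|^2$ on $\{|V|>c_0\}$ and use your choice of $c_0$.

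\textbf{Minor remarks.} Using uniform continuity of $\Psi$ on $K_x\times\{|z|_\infty\le\|\varphi\|_\infty\}$ instead of the paper's global Lipschitz bound $L_\Psi\ell\theta\le\tau$ is a valid and slightly more general variant, since no Lipschitz activation is needed. For $p>1$ the tail constant is $(1+\sqrt p)^2c^2$ rather than $4c^2$.
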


\begin{proof}
The argument first truncates $V$ and restricts the data to a compact set.
It then factors the conditional law map through finitely many features,
approximates the functional by a cylindrical network on the resulting compact
set, and finally combines the estimates in $L^2$. We abbreviate
$\mu_t(\mathbf w):=\overline\Phi_t(\mu_0,r_t\mathbf w)$.

\medskip
\noindent\emph{Truncation of $V$.}

For \(R>0\), define the truncation
\[
V_R(x,\mu)=
\begin{cases}
V(x,\mu), & \text{if } |V(x,\mu)| \le R,\\[8pt]
R\,\dfrac{V(x,\mu)}{|V(x,\mu)|}, & \text{if } |V(x,\mu)| > R.
\end{cases}
\]
Equivalently, $V_R=\pi_R\circ V$, where $\pi_R$ is the radial projection
onto the closed ball of radius $R$ in $\mathbb R^p$.

Then $|V_R|\leq R$ everywhere, $|V_R|\leq|V|$, and $V_R\to V$
pointwise as $R\to\infty$.

Set
\[
f_R(x,\mu) := |V(x,\mu) - V_R(x,\mu)|^2.
\]
Then \(f_R \to 0\) pointwise, with the domination
\[
f_R(x,\mu)=|V - V_R|^2 \le |V|^2
\quad\text{and}\quad
|V|^2 \in L^1.
\]

By dominated convergence,
\[
\|V - V_R\|_{L^2}^2
=
\iiint |V - V_R|^2\, \mu_t(\mathbf w)(dx)\, d(\nu \otimes \mathbf P^0)
\longrightarrow 0
\qquad \text{as } R \to \infty.
\]

Fix \(R\) large enough that
\begin{equation}
\|V - V_R\|_{L^2}^2 \le \frac{\varepsilon}{8}.
\label{eq:truncation}
\end{equation}
Set
\[
C_p:=(1+\sqrt p)^2.
\]

\medskip
\noindent\emph{Reduction to a compact set.}
The spaces $(\mathcal P_m(\mathbb R^d),\mathcal W_m)$ and
$(C_g^{0,\alpha}([0,T];\mathbb R^{q+1}),\rho_\alpha)$ are Polish, and so is
their product. Every probability measure on a Polish space is tight
\cite[Theorem~1.3]{billingsley1999convergence}. Consequently, for every
$\delta>0$ there exist compact sets
\[
K_\mu\subset \mathcal P_m(\mathbb R^d),
\qquad
K_\omega\subset C_g^{0,\alpha}([0,T];\mathbb R^{q+1})
\]
such that
\[
\nu(K_\mu^c)\le \delta,
\qquad
\mathbf P^0(K_\omega^c)\le \delta.
\]

Moreover, $\mathbf P^0$ is carried by the closed subset
\[
\mathcal{T}
:=
\left\{
\mathbf{w} \in C_g^{0,\alpha}
:\ 
\pi_1\!\left(\mathbf{w}_{0,s}^{(1)}\right)=s
\ \text{for all } s \in [0,T]
\right\}
\]
of \rev{time augmented} rough paths. Replacing \(K_\omega\) by the compact set
\(K_\omega \cap \mathcal{T}\) preserves the bound
\(\mathbf{P}^0(K_\omega^c) \leq \delta\); we do so henceforth. On
\(\mathcal{T}\), the full signature is injective~\cite{boedihardjo2014signature}.

Choose
\[
\delta=\frac{\varepsilon}{16C_pR^2}.
\]
By the union bound,
\[
(K_\mu\times K_\omega)^c
=
(K_\mu^c \times C_g^{0,\alpha})\cup(\mathcal P_m\times K_\omega^c),
\]
hence
\begin{equation}
(\nu\otimes \mathbf P^0)\bigl((K_\mu\times K_\omega)^c\bigr)
\leq \nu(K_\mu^c)+\mathbf P^0(K_\omega^c)
\leq 2\delta
=
\frac{\varepsilon}{8C_pR^2}.
\label{eq:compact}
\end{equation}

Set
\(
\mathcal X:=K_\mu\times K_\omega.
\)
This is the high probability compact set for the full data on $[0,T]$.

We now restrict the rough path component to the interval relevant for
the value at time $t$. Define
\[
K_\omega^t:=r_t(K_\omega)
\subset
C^{0,\alpha}_g([0,t];\mathbb{R}^{q+1}),
\qquad
\mathcal X_t:=K_\mu\times K_\omega^t.
\]
Since $r_t$ is continuous and $K_\omega$ compact, $K_\omega^t$ and $\mathcal{X}_t$ are compact. Every element of $K_\omega^t$ is \rev{time augmented}, since the defining property of $\mathcal{T}$ is preserved under restriction to $[0,t]$.

Moreover, by causality of the frozen equation, for every
$(\mu_0,\mathbf{w})\in\mathcal X$,
\[
\mu_t(\mathbf{w})
=
\overline{\Phi}_t(\mu_0,r_t\mathbf{w}),
\qquad
(\mu_0,r_t\mathbf{w})\in\mathcal X_t.
\]
All subsequent factorization arguments will therefore be performed on
$\mathcal X_t$.

\medskip
\noindent\emph{Compactness of the image.}
Since $\overline{\Phi}_t$ is continuous and $\mathcal X_t$ is compact,
\[
K
:=
\overline{\Phi}_t(\mathcal X_t)
\subset \mathcal P_2(\mathbb R^d)
\]
is compact. Moreover,
\(
K
=
\left\{
\mu_t(\mathbf{w}):
(\mu_0,\mathbf{w})\in\mathcal X
\right\}
\)
by causality.

\medskip
\noindent\emph{$L^2$ reassembly.}

We have:
\begin{itemize}
    \item a radius \(R>0\) with \(V_R=\pi_R\circ V\), \(|V_R|\leq R\),
    satisfying \eqref{eq:truncation};
    \item compact sets \(K_\mu,K_\omega\),
    \(\mathcal X=K_\mu\times K_\omega\), and
    \(\mathcal{X}_t=K_\mu\times r_t(K_\omega)\), satisfying
    \eqref{eq:compact}, with
    \[
    K:=\overline{\Phi}_t(\mathcal X_t)
    =\{\mu_t(\mathbf w):(\mu_0,\mathbf w)\in\mathcal X\}
    \]
    compact.
\end{itemize}

\smallskip

\noindent\emph{Spatial truncation.}
Since $K$ is compact in $(\mathcal P_2(\mathbb R^d),\mathcal W_2)$, it is also
relatively compact for the weak topology. Indeed, convergence in $\mathcal W_2$
implies weak convergence of probability measures \cite[Theorem~6.9]{villani_ot}. Hence \(K\) is relatively
compact in \((\mathcal P(\mathbb R^d),\Rightarrow)\), where \(\Rightarrow\)
denotes weak convergence.

By Prokhorov's theorem on the Polish space \(\mathbb R^d\), weak relative
compactness of a family of probability measures is equivalent to uniform
tightness \cite[Chapter~1, Section~5]{billingsley2008probability}.
Therefore \(K\) is uniformly tight. Consequently, for every \(\eta>0\), there
exists a compact set \(C_\eta\subset\mathbb R^d\) such that
\[
\sup_{\mu\in K}\mu(C_\eta^c)\le \eta.
\]
Taking
\[
\eta:=\frac{\varepsilon}{8C_pR^2},
\]
we obtain a compact set \(C_\eta\subset\mathbb R^d\) satisfying
\[
\sup_{\mu\in K}\mu(C_\eta^c)\le \frac{\varepsilon}{8C_pR^2}.
\]
Since \(C_\eta\) is compact in \(\mathbb R^d\), it is bounded. Hence there
exists \(\rho>0\) such that
\[
C_\eta\subset B_\rho:=\{x\in\mathbb R^d:|x|\le \rho\}.
\]
Thus
\[
B_\rho^c\subset C_\eta^c,
\]
and consequently
\[
\sup_{\mu\in K}\mu(B_\rho^c)
\le
\sup_{\mu\in K}\mu(C_\eta^c)
\le
\frac{\varepsilon}{8C_pR^2}.
\]

\smallskip

\noindent\emph{The cylindrical layer.}
The map
\[
V_R=\pi_R\circ V
\]
is continuous on \(\mathbb R^d\times \mathcal P_2(\mathbb R^d)\).
Applying Lemma~2 to $V_R$ on $B_\rho\times K$, with tolerance
\(
\beta:=\frac{\sqrt{\varepsilon}}{8},
\)
there exist \(k\in\mathbb N\), inner functions
\[
\varphi=(\varphi_1,\ldots,\varphi_k)
\in C_b(\mathbb R^d;\mathbb R^k),
\]
bounded and Lipschitz, and an outer network \(\Psi_0\) such that
\[
\sup_{x\in B_\rho,\; m\in K}
\left|
V_R(x,m)
-
\Psi_0\bigl(x,\langle \varphi,m\rangle\bigr)
\right|
\leq
\beta.
\]
Define the coordinatewise clipping map
\[
\bigl(c_R(y)\bigr)_j:=\max\{-R,\min\{y_j,R\}\},
\qquad j=1,\ldots,p,
\]
and set $\Psi:=c_R\circ\Psi_0$. The map $c_R$ is represented exactly by a
coordinatewise ReLU layer and is $1$ Lipschitz. Since $|V_R|\leq R$, it fixes
$V_R$, and therefore
\[
\left|
V_R-\Psi(\cdot,\langle \varphi,\cdot\rangle)
\right|
\leq
\left|
V_R-\Psi_0(\cdot,\langle \varphi,\cdot\rangle)
\right|
\leq
\beta
\]
on $B_\rho\times K$. Moreover, $|\Psi|\leq\sqrt pR$ everywhere. Since the outer network is chosen with a Lipschitz activation and the
clipping map is $1$ Lipschitz, $\Psi$ is globally Lipschitz. We write
\[
\ell:=\operatorname{Lip}(\varphi)<\infty,
\qquad
L_\Psi:=\operatorname{Lip}(\Psi)<\infty.
\]

\smallskip

\noindent\emph{Tolerance for the law approximation.}

Choose \(\theta>0\) such that
\[
L_\Psi\ell\theta
\leq
\tau
:=
\frac{\sqrt{\varepsilon}}{8}.
\]

\smallskip

\noindent\emph{The conditional law network.}
Lemma~1 applied on \(\mathcal{X}_t\) with tolerance \(\theta\) yields orders \(L,M\)
and a conditional network
\[
\widehat N:\mathbb R^N\longrightarrow \mathcal P_2(\mathbb R^d),
\qquad
N=L+d_{\mathrm{sig}}(M,q),
\]
with Gaussian mixture output such that
\[
\sup_{(\mu_0,\mathbf v)\in\mathcal X_t}
\mathcal W_2\!\left(
\overline\Phi_t(\mu_0,\mathbf v),
\widehat N\bigl(g_M(\mu_0,\mathbf v)\bigr)
\right)
\leq\theta.
\]
By causality, for every $(\mu_0,\mathbf w)\in\mathcal X$ the pair
$(\mu_0,r_t\mathbf w)$ belongs to $\mathcal X_t$, so that
\[
\sup_{(\mu_0,\mathbf w)\in\mathcal X}
\mathcal W_2\!\left(
\mu_t(\mathbf w),
\widehat N\bigl(g_M(\mu_0,r_t\mathbf w)\bigr)
\right)
\leq\theta.
\]
We define the approximant in the announced class by
\[
\widehat V(x,\mu_0,\mathbf{w})
:=
\Psi
\left(
x,
\left\langle
\varphi,
\widehat N(g_M(\mu_0,r_t\mathbf{w}))
\right\rangle
\right),
\qquad
g_M(\mu_0,r_t\mathbf{w})=(E_L(\mu_0),S_M(r_t\mathbf{w})).
\]

\smallskip

\noindent\emph{Pointwise estimate on $\mathcal X$.}
Let \((\mu_0,\mathbf{w})\in \mathcal{X}\), and set
\[
\mu_t := \mu_t(\mathbf w)=\overline\Phi_t(\mu_0,r_t\mathbf w),
\qquad
\widehat\nu:=\widehat N(g_M(\mu_0,r_t\mathbf{w})).
\]
Then
\[
\mathcal W_2(\mu_t,\widehat\nu)\leq \theta.
\]
We split
\[
V_R(x,\mu_t)-\widehat V
=
\underbrace{
\left[
V_R(x,\mu_t)
-
\Psi\bigl(x,\langle \varphi,\mu_t\rangle\bigr)
\right]
}_{T_1}
+
\underbrace{
\left[
\Psi\bigl(x,\langle \varphi,\mu_t\rangle\bigr)
-
\Psi\bigl(x,\langle \varphi,\widehat\nu\rangle\bigr)
\right]
}_{T_2}.
\]

For $x\in B_\rho$, we have $|T_1|\leq\beta$ since $\mu_t\in K$. Moreover,
by Kantorovich--Rubinstein and the Lipschitz property of $\varphi$,
    \[
    \left|
    \langle \varphi,\mu_t\rangle
    -
    \langle \varphi,\widehat\nu\rangle
    \right|
    \leq
    \ell\,\mathcal W_1(\mu_t,\widehat\nu)
    \leq
    \ell\,\mathcal W_2(\mu_t,\widehat\nu)
    \leq
    \ell\theta.
    \]

Hence
\[
|T_2|
\leq
L_\Psi
\left|
\langle \varphi,\mu_t\rangle
-
\langle \varphi,\widehat\nu\rangle
\right|
\leq L_\Psi\ell\theta
\leq\tau.
\]

Thus
\[
|V_R(x,\mu_t)-\widehat V|
\leq
\beta+\tau
\qquad
\text{on } B_\rho.
\]

For \(x\in B_\rho^c\), we use the clipping:
\[
|V_R(x,\mu_t)-\widehat V|
\leq
|V_R|+|\widehat V|
\leq R+\sqrt pR.
\]
Integrating with respect to \(\mu_t\), and since \(\mu_t\in K\) gives
\[
\mu_t(B_\rho^c)
\leq
\frac{\varepsilon}{8C_pR^2},
\]
we obtain
\[
\int_{\mathbb R^d}
|V_R(x,\mu_t)-\widehat V|^2\,\mu_t(dx)
\leq
(\beta+\tau)^2+C_pR^2\frac{\varepsilon}{8C_pR^2}
=\frac{\varepsilon}{16}+\frac{\varepsilon}{8}
=\frac{3\varepsilon}{16}.
\]

where we used
\[
(\beta+\tau)^2
=
\left(\frac{\sqrt{\varepsilon}}{4}\right)^2
=
\frac{\varepsilon}{16}.
\]
This bound is uniform on $\mathcal X$.

\smallskip

\noindent\emph{Integration.}
On \(\mathcal{X}\), we have
\[
\int_\mathcal{X}
\left(
\int_{\mathbb R^d}
|V_R-\widehat V|^2\,\mu_t(dx)
\right)
d( \nu\otimes \mathbf P^0)
\leq
\frac{3\varepsilon}{16}(\nu\otimes \mathbf P^0)(\mathcal X)
\leq
\frac{3\varepsilon}{16}.
\]
On $\mathcal X^c$, the global bounds give
\[
|V_R-\widehat V|^2\leq C_pR^2.
\]
Hence
\[
\int_{\mathcal X^c}
\left(
\int_{\mathbb R^d}
|V_R-\widehat V|^2\,\mu_t(dx)
\right)
d(\nu\otimes \mathbf P^0)
\leq
C_pR^2(\nu\otimes \mathbf P^0)(\mathcal X^c)
\leq
C_pR^2\frac{\varepsilon}{8C_pR^2}
=\frac{\varepsilon}{8}.
\]
Therefore
\[
\|V_R-\widehat V\|_{L^2}^2
\leq
\frac{3\varepsilon}{16}+\frac{\varepsilon}{8}
=\frac{5\varepsilon}{16}.
\]
Finally, using
\[
|a-c|^2\leq 2|a-b|^2+2|b-c|^2
\]
with \(b=V_R\), together with \eqref{eq:truncation}, we obtain
\[
\|V-\widehat V\|_{L^2}^2
\leq
2\|V-V_R\|_{L^2}^2
+
2\|V_R-\widehat V\|_{L^2}^2
\leq
2\cdot \frac{\varepsilon}{8}
+
2\cdot \frac{5\varepsilon}{16}
=\frac{7\varepsilon}{8}
<\varepsilon.
\]

This is exactly
\[
\int_{\mathcal P_m(\mathbb R^d)}
\int_{C_g^{0,\alpha}}
\int_{\mathbb R^d}
\left|
V(x,\mu_t(\mathbf{w}))
-
\Psi\left(
x,
\left\langle
\varphi,
\widehat N(g_M(\mu_0,r_t\mathbf w))
\right\rangle
\right)
\right|^2
\mu_t(\mathbf{w})(dx)\,\mathbf P^0(d\mathbf{w})\,\nu(d\mu_0)
\leq
\varepsilon,
\]
where
\[
g_M(\mu_0,r_t\mathbf{w})
=
\bigl(E_L(\mu_0),S_M(r_t\mathbf{w})\bigr).
\]
Thus \(L,M\in\mathbb N^*\), \(\widehat N\), \(k\), \(\varphi\), and \(\Psi\)
have been constructed as required.
\end{proof}
\section{Numerical experiments}
This section evaluates the conditional cylindrical architecture as a
neural density estimation and functional regression pipeline. The
experiments address four questions: whether finite Fourier and signature
features enable accurate prediction of the conditional law; whether the
resulting amortized law representation improves the downstream
approximation of \(V(x,\mu_T)\) relative to an empirical particle plug in;
whether this advantage persists for \rev{non Gaussian} initial laws; and which
feature and architectural choices have the largest effect on performance.
The PyTorch implementation, benchmark configurations, random seeds,
complete per run metrics, and aggregation scripts are available in the
companion repository at \url{https://github.com/HmiouiReda/cylindrical-mckean-vlasov}. 

\subsection{Benchmark models}

The six examples below are designed to test distinct parts of the proposed
architecture. All models have scalar common noise, so $q=1$, and interact
through the conditional mean
$m_s:=\mathbb E[X_s\mid\mathcal F_s^{W^0}]$. The first three examples share
the conditional OU dynamics
\[
 dX_s=a(m_s-X_s)\,ds+\sigma\,dB_s+\sigma_0\,dW_s^0,
 \qquad a=1,\quad \sigma=0.4,\quad \sigma_0=0.3,\quad T=1,
\]
but use different \rev{scenario dependent} initial laws. They provide exact
conditional distributions against which both stages of the architecture can
be evaluated. The remaining examples introduce nonlinear drift,
multiplicative common noise, and a \rev{two dimensional} state.

\begin{example}[Gaussian initial law]
\label{ex:ou-gaussian}
The initial condition satisfies
\[
X_0\sim\mathcal N(m_0,s_0^2),
\qquad
m_0\sim\mathcal U(-1,1),
\qquad
s_0\sim\mathcal U(0.2,0.8).
\]
The conditional terminal law is
\[
\mu_T
=
\mathcal N\!\left(
m_0+\sigma_0W_T^0,\,
s_0^2e^{-2aT}
+\frac{\sigma^2}{2a}\bigl(1-e^{-2aT}\bigr)
\right).
\]
\end{example}

\begin{example}[Uniform initial law]
\label{ex:ou-uniform}
Let
\[
X_0=m_0+U_0,
\qquad
U_0\sim\mathcal U[-h_0,h_0],
\]
where
\[
m_0\sim\mathcal U(-1,1),
\qquad
h_0\sim\mathcal U(0.3,1.0).
\]
Conditionally on \(W^0\), the terminal random variable admits the exact
representation
\[
X_T=m_T+e^{-aT}U_0+Z_T,
\qquad
m_T=m_0+\sigma_0W_T^0,
\]
where
\[
Z_T\sim\mathcal N(0,v_{\mathrm{noise}}),
\qquad
v_{\mathrm{noise}}
=
\frac{\sigma^2}{2a}\bigl(1-e^{-2aT}\bigr).
\]
Hence, the conditional terminal law is an exact uniform--Gaussian
convolution.
\end{example}

\begin{example}[Two component Gaussian mixture initial law]
\label{ex:ou-gmm}
The initial law is
\[
X_0
\sim
p\,\mathcal N(\mu_{1,0},s_c^2)
+
(1-p)\,\mathcal N(\mu_{2,0},s_c^2),
\]
where
\[
m_0\sim\mathcal U(-1,1),
\qquad
p\sim\mathcal U(0.25,0.75),
\qquad
s_c\sim\mathcal U(0.10,0.30),
\qquad
\Delta\sim\mathcal U(0.6,1.8),
\]
and
\[
\mu_{1,0}=m_0-(1-p)\Delta,
\qquad
\mu_{2,0}=m_0+p\Delta.
\]
Thus,
\[
p\mu_{1,0}+(1-p)\mu_{2,0}=m_0
\]
and the component separation is
\(\mu_{2,0}-\mu_{1,0}=\Delta\).
The conditional terminal law remains an exact \rev{two component} Gaussian
mixture with unchanged weights, component means
\[
\mu_{j,T}
=
m_T+e^{-aT}(\mu_{j,0}-m_0),
\qquad j\in\{1,2\},
\]
and common component variance
\[
s_{c,T}^2
=
e^{-2aT}s_c^2+v_{\mathrm{noise}}.
\]
\end{example}

\begin{example}[Double well model]
\label{ex:double-well}
\[
dX_s=\bigl[\kappa(X_s-X_s^3)+\theta(m_s-X_s)\bigr]\,ds+\sigma\,dB_s+\sigma_0\,dW_s^0,
\]
with
\[
\kappa=1,\qquad \theta=0.25,\qquad \sigma=0.5,
\qquad \sigma_0=0.25,\qquad T=2.
\]
The initial law is $X_0\sim\mathcal N(m_0,s_0^2)$, where
$m_0\sim\mathcal U(-0.3,0.3)$ and
$s_0\sim\mathcal U(0.3,0.6)$. The conditional laws are nonlinear and may be
bimodal, since the common noise changes the relative occupancy of the two
wells. We discretize the superlinear drift using the tamed Euler scheme of
Hutzenthaler--Jentzen--Kloeden \cite{hutzenthaler2012tamed}. No \rev{closed form}
conditional law is available, so validation uses the particle reference
described in Subsection~\ref{sec:reference-design}.
\end{example}

\begin{example}[Multiplicative common noise]
\label{ex:multiplicative}
\[
dX_s=a(m_s-X_s)\,ds+\sigma\,dB_s+\sigma_0(X_s)\,dW_s^0
\qquad \text{(It\^o)},\qquad
\sigma_0(x)=\nu_0(1+\beta\sin x),
\]
with $a=1$, $\sigma=0.3$, $\nu_0=0.4$, $\beta=0.8$, and $T=1$. The initial law is \rev{scenario dependent} and Gaussian,
$X_0\sim\mathcal N(m_0,s_0^2)$. The coefficient $\sigma_0$ is smooth,
bounded, and bounded away from zero for $\beta<1$, and it is \rev{state dependent},
so the It\^o--Stratonovich correction
$\frac12\sigma_0\sigma_0'(x)=\frac12\nu_0^2\beta\cos x\,(1+\beta\sin x)$
does not vanish. This is the benchmark that stresses Remark~2; see
Section~\ref{sec:ito-strat-num}.
\end{example}

\begin{example}[Two dimensional conditional OU]
\label{ex:ou-2d}
\[
dX_s=a(m_s-X_s)\,ds+L\,dB_s+\sigma_0^{\mathrm{vec}}\,dW_s^0,
\qquad X_s\in\mathbb{R}^2,
\]
with $LL^\top=C$, $C_{11}=\sigma_1^2$, $C_{22}=\sigma_2^2$, $C_{12}=\rho\,\sigma_1\sigma_2$, and parameters $a=1$, $\sigma_1=0.4$, $\sigma_2=0.3$, $\rho=0.5$, $\sigma_0^{\mathrm{vec}}=(0.3,-0.2)$, and $T=1$. For a \rev{scenario dependent} initial law $\mathcal N_2(m_0,\operatorname{diag}(s_0^2))$, the conditional terminal law is exactly
\[
\mathcal N_2\!\left(
m_0+\sigma_0^{\mathrm{vec}}W_T^0,
\operatorname{diag}(s_0^2)e^{-2aT}
+\frac{1-e^{-2aT}}{2a}C
\right),
\]
with a non diagonal covariance. The common noise remains scalar, so the signature features are unchanged; this benchmark isolates the multidimensional measure argument.
\end{example}

\medskip
\noindent\emph{Scope of the benchmarks.}
The drifts in Examples~\ref{ex:ou-gaussian}--\ref{ex:ou-gmm} and
Example~\ref{ex:double-well} are respectively linear and superlinear in $x$
and are therefore unbounded. These models sit formally outside the boundedness
assumptions of Proposition~1, but they are retained because they provide
standard test cases with either exact conditional laws or \rev{high quality}
particle references.
The boundedness condition is inherited from the rough
path theorem used in Proposition~1 in the form stated in
\cite{friz2025mckean}. Extensions under standard linear growth and
dissipativity assumptions are expected, but they are not proved here. The diffusion coefficient in
Example~\ref{ex:multiplicative} is smooth, bounded, and elliptic. All six
examples interact through the conditional mean; richer dependence on the
conditional law is left for future work.

\subsection{Leakage free simulation and reference design}
\label{sec:reference-design}

For each main experiment, \(M=6000\) independent common noise scenarios
are generated for Examples~\ref{ex:ou-gaussian}--\ref{ex:ou-gmm} and
Example~\ref{ex:ou-2d}, and \(M=4000\) for
Examples~\ref{ex:double-well} and \ref{ex:multiplicative}. For each
scenario, a training system of \(N=200\)
interacting particles is simulated using its own empirical mean field.
Euler--Maruyama is used for all benchmarks except the \rev{double well} model,
whose superlinear drift is discretized using a tamed Euler scheme. The
benchmark dependent time grids contain 100 or 200 time steps. The scenarios
are split into \(70\%/15\%/15\%\) training, validation, and test sets.

For the models without a \rev{closed form} conditional law, we generate two
conditionally independent reference systems \(A\) and \(B\), each containing
\(N_{\mathrm{ref}}=2000\) particles and sharing only
\((\mu_0,W^0)\). Their initial samples and idiosyncratic Brownian motions are
independent, and each system evolves using its own empirical mean field.
Reference \(A\) supplies the Fourier integrals used by the
independent reference baseline, whereas reference \(B\) supplies the
regression targets. Hence, the measure inputs of this baseline and the
targets used to train \(\Psi\) are not constructed from the same finite
particle realization. The pooled cloud \(A\cup B\) is used only for
law level validation and density plots.

The stochastic resolution of the pooled reference is estimated from the
two replicates. Let \(D_{AB}\) denote the per scenario \(W_2\) distance
between their sorted empirical samples. Under an equal variance
independent error approximation, we use the mean distance estimate
\[
\varepsilon_{\mathrm{ref}}^{W_2}
\approx
\frac{1}{2}\,\mathbb E[D_{AB}].
\]

For the downstream functional, let \(Y_A\) and \(Y_B\) denote targets
constructed independently from the two reference systems. The relative
reference target floor is estimated by
\[
\varepsilon_{\mathrm{ref}}^{V}
=
\frac{
\left(\frac{1}{2}\mathbb E|Y_A-Y_B|^2\right)^{1/2}
}{
\left(\mathbb E|Y_B|^2\right)^{1/2}
}.
\]
We additionally report the noise adjusted diagnostic
\[
e_{\mathrm{adj}}
=
\sqrt{
\max\left\{
e_{\mathrm{obs}}^2
-
\left(\varepsilon_{\mathrm{ref}}^{V}\right)^2,
0
\right\}
}.
\]
This variance subtraction adjustment is used only for
Examples~\ref{ex:double-well} and \ref{ex:multiplicative}. It should be
interpreted as a diagnostic rather than as an exact deconvolution of the
finite reference error. Exact law benchmarks have no reference target
noise.

\subsection{Neural architecture and training}
\label{sec:architecture-numerical}

This subsection describes the numerical implementation of the factorized
approximation class introduced in Section~2. The first stage encodes the
initial distribution and the \rev{common noise path}, then trains a \rev{mixture density}
network to approximate the conditional law. The second stage freezes this law
representation and trains the cylindrical network $\Psi$ to evaluate the
target functional. We specify the input features, network dimensions,
optimization procedure, and evaluation functionals in turn.
Algorithm~\ref{alg:conditional-cylindrical-training} summarizes the complete
training and inference pipeline.

\begin{algorithm}[H]
\caption{Training and inference for the conditional cylindrical network}
\label{alg:conditional-cylindrical-training}
\begin{algorithmic}[1]
\Require Training scenarios
\(\{(\mu_0^i,W^{0,i},X_T^{i,1},\ldots,X_T^{i,N})\}_{i=1}^{N_{\rm sc}}\).
\Require Evaluation states and labels
\(\{(x^{i,r},Y^{i,r})\}_{i,r}\), where the labels
\(Y^{i,r}=V(x^{i,r},\mu_T^i)\)  are computed from the exact conditional law when available, and from an
independent reference system otherwise.
\Require Fourier order \(L\), signature level \(M_{\rm sig}\), and number of
mixture components \(J\), and the number of  epochs for $E_{\rm MDN}$ and  \(E_{\Psi}\).
\For{\(i=1,\ldots,N_{\rm sc}\)}
    \State Compute
    \(z_i=\bigl(E_L(\mu_0^i),
    S_{M_{\rm sig}}(r_T\widehat{\mathbf W}^{0,i})\bigr)\)
\EndFor
\State Fit the feature standardization map
\(\mathcal S_{\rm tr}\) using the training scenarios only

\For{\(i=1,\ldots,N_{\rm sc}\)}
    \State Set
    \[
    \widetilde z_i=\mathcal S_{\rm tr}(z_i)
    \]
\EndFor

\State Initialize the parameters \(\theta\) of the mixture-density network

\For{\(e=1,\ldots,E_{\rm MDN}\)}
    \State Shuffle the training scenarios and partition them into minibatches
    \For{each minibatch \(\mathcal B\) of scenarios}
        \State For every scenario \(i\in\mathcal B\), draw \(K_{\rm keep}\)
        terminal particles with indices \(\mathcal K_i\)
        \State Take one gradient step on the conditional negative log-likelihood
        \[
        \mathcal L_{\rm MDN}(\theta)
        =
        -\frac{1}{|\mathcal B|\,K_{\rm keep}}
        \sum_{i\in\mathcal B}\sum_{n\in\mathcal K_i}
        \log p_{\widehat N_\theta(\widetilde z_i)}\bigl(X_T^{i,n}\bigr)
        \]
    \EndFor
    \State Evaluate the validation NLL and retain the checkpoint
    with the lowest validation value
\EndFor

\State Restore the best MDN parameters and freeze \(\theta\)

\For{\(i=1,\ldots,N_{\rm sc}\)}
    \State Set \(\widehat\mu_T^i=\widehat N_\theta(\widetilde z_i)\)
    \State Compute
    \(c_i=\langle\varphi,\widehat\mu_T^i\rangle\) analytically from the
    Gaussian mixture
\EndFor
\State Initialize the parameters \(\eta\) of the cylindrical network
\(\Psi_\eta\)
\For{\(e=1,\ldots,E_{\Psi}\)}
     \State Shuffle the training evaluation pairs $(i,r)$ and iterate over minibatches $\mathcal{B}_{\Psi}$

    \State Update \(\eta\) by minimizing the mean-squared regression loss
    \[
    \mathcal L_{\Psi}(\eta)
    =
    \frac{1}{|\mathcal B_{\Psi}|}
    \sum_{(i,r)\in\mathcal B_{\Psi}}
    \left|
    Y^{i,r}
    -
    \Psi_\eta(x^{i,r},c_i)
    \right|^2
    \]

    \State Evaluate the validation regression loss and retain the checkpoint
    with the lowest validation value
\EndFor

\State Restore the best cylindrical-network parameters

\State At inference, compute
\Statex \hfill
\(
\displaystyle
z=
\left(
E_L(\mu_0),
S_{M_{\rm sig}}\!\left(r_T\widehat{\mathbf W}^{0}\right)
\right),
\qquad
\widetilde z=\mathcal S_{\rm tr}(z)
\qquad \text{and} \qquad
\widehat V(x,\mu_0,W^0)
=
\Psi_\eta\!\left(
x,
\left\langle
\varphi,
\widehat N_\theta(\widetilde z)
\right\rangle
\right)
\)
\hfill\mbox{}

\State \Return \(\widehat N_\theta\) and \(\Psi_\eta\)
\end{algorithmic}
\end{algorithm}

\paragraph{Features.}
The initial law encoding contains sine and cosine moments at
\(P=4\) positive frequencies
\(
k\in\{0.5,1,1.5,2\}
\)
in one dimension and at eight prescribed frequency vectors in two
dimensions. These moments are evaluated analytically for the three OU
initial law families using the corresponding characteristic functions.
The common noise encoding is the truncated signature of the
\rev{piecewise linear} interpolation of
$s\longmapsto(s,W_s^0)$,
computed at level \(2\) for Examples~\ref{ex:ou-gaussian}--\ref{ex:ou-gmm}
and Example~\ref{ex:ou-2d}, and at level \(3\) for
Examples~\ref{ex:double-well} and \ref{ex:multiplicative}. All inputs are
standardized using statistics computed from the training split.

\paragraph{Conditional law network.}
The conditional law estimator is a \rev{mixture density} network with a SiLU
multilayer perceptron trunk containing two hidden layers of width \(128\).
Its output parametrizes a Gaussian mixture with diagonal covariances,
using \(J=5\) components in one dimension and \(J=6\) in two dimensions.
The MDN is trained by conditional negative log likelihood for \(400\)
epochs using Adam, cosine learning rate annealing from \(10^{-3}\) to
\(10^{-5}\), a batch size of \(256\), and \rev{validation based} model selection.
At each gradient step, \(K_{\mathrm{keep}}=64\) particles are subsampled
from the \(N=200\) particles available in each scenario. Training histories
record the training and validation NLL together with a \rev{fixed subset}
validation conditional law discrepancy.

\paragraph{Cylindrical network.}
The downstream network \(\Psi\) is a two hidden layer, width \(128\) SiLU
MLP. Its inputs are the state \(x\) and the Fourier integrals of one of
three measure representations: the empirical \(N\) particle cloud; the
Gaussian mixture predicted by the MDN, whose Fourier integrals are
available analytically; or the exact conditional law when available and
the independent reference \(A\) representation otherwise. Each measure
channel has its own copy of \(\Psi\), trained on the same evaluation points.
In Examples~\ref{ex:double-well} and \ref{ex:multiplicative}, reference \(A\)
supplies the measure inputs, while reference \(B\) supplies the corresponding
regression labels. The MDN is frozen during this second
stage, and each copy of \(\Psi\) is trained by least squares for \(350\)
epochs.

\paragraph{Target functionals.}
The moment functional is
\[
V_{\mathrm{mom}}(x,\mu)
=
\langle x,m_1(\mu)\rangle
+
\operatorname{tr}\!\bigl(\operatorname{Cov}(\mu)\bigr)
+
\sin(x_1),
\qquad
m_1(\mu):=\int_{\mathbb R^d}y\,\mu(dy).
\]
For \(d=1\), we also consider
\[
V_{\mathrm{quant}}(x,\mu)
=
x\bigl(q_{0.75}(\mu)-q_{0.25}(\mu)\bigr)
+
F_\mu(0)
+
\sin x,
\qquad
F_\mu(0):=\mu((-\infty,0]).
\]
The second functional is a numerical stress test beyond the global
continuity assumptions of Theorem~1, since quantiles and
\(\mu\mapsto F_\mu(0)\) may be discontinuous at measures with atoms.

\paragraph{Metrics.}
In one dimension, \(\mathcal W_2\) is computed from \(2048\) midpoint
quantiles, with MDN quantiles obtained by numerical bisection. In two
dimensions, we use sliced \(\mathcal W_2\) over \(16\) deterministic
half circle directions. Downstream accuracy is measured by the relative
\(L^2\) test error over scenarios and evaluation points. Unless stated
otherwise, displayed dispersions are standard deviations across five
independent seeds, with both the simulated data and network initialization
regenerated.

Figure~\ref{fig:training-convergence} shows the optimization diagnostics
for a representative OU--GMM run. The MDN training and validation NLL and
the validation conditional law discrepancy decrease rapidly before
stabilizing, whereas the downstream relative \(L^2\) error improves more
gradually. This suggests using separate validation criteria for the
conditional law and functional regression stages.

\begin{figure}[H]
    \centering
    \includegraphics[width=0.98\linewidth]{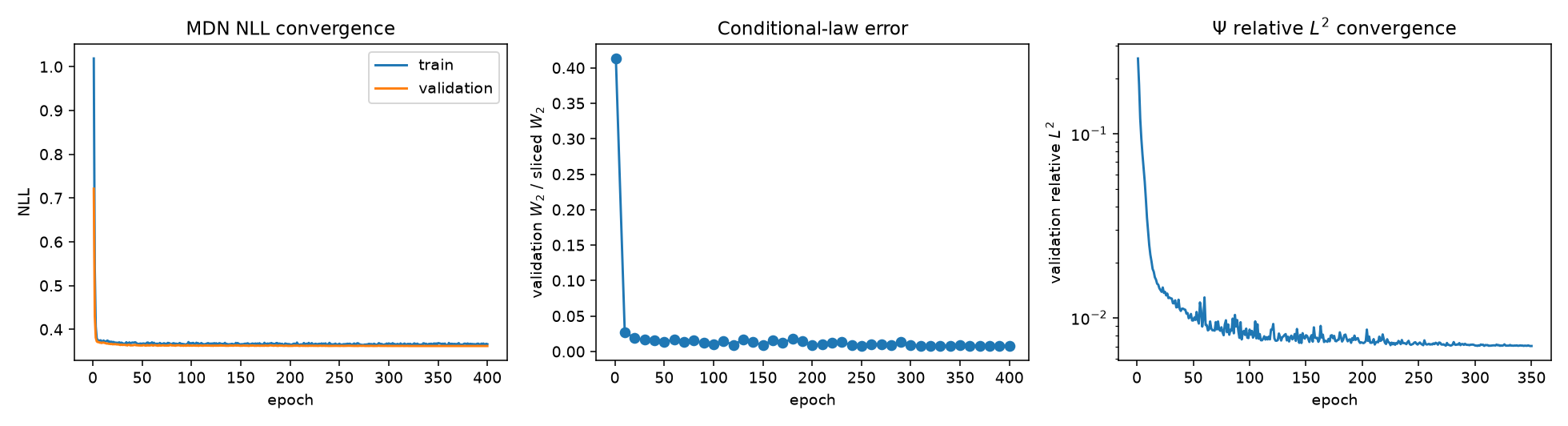}
    \caption{Representative optimization curves for the OU--GMM benchmark
    at seed \(0\): MDN training and validation NLL, validation
    conditional law discrepancy, and downstream \(\Psi\) relative
    \(L^2\) error.}
    \label{fig:training-convergence}
\end{figure}

\subsection{Main conditional law results}
\label{sec:law-results}
Table~\ref{tab:law-main} reports the five seed headline results. The MDN discrepancy is below that of the empirical $N=200$ cloud for every benchmark. The \rev{non Gaussian} OU models retain the same advantage: the mean $\mathcal W_2$ is $0.0054$ for the uniform initial law and $0.0082$ for the \rev{two component} mixture, compared with empirical cloud discrepancies $0.0450$ and $0.0550$.

\paragraph{Interpretation of the empirical comparison.}
The comparison with the empirical \(N=200\) particle representation is
amortized rather than matched at an identical information budget. The MDN
learns across many common noise scenarios and can exploit structure shared
across scenarios, whereas the empirical particle measure is constructed
independently within each test scenario and receives no cross scenario
training. The comparison therefore quantifies the gain from amortized
conditional law estimation and should not be interpreted as a comparison
between two estimators trained from exactly the same information.

\begin{table}[H]
\centering
\caption{Conditional law approximation over five independent seeds.
Values are means \(\pm\) standard deviations across seeds.
The reference resolution is reported only for benchmarks without a
\rev{closed form} conditional law. The \rev{one dimensional} benchmarks use
\(\mathcal W_2\), whereas Example~\ref{ex:ou-2d} uses sliced
\(\mathcal W_2\).}
\label{tab:law-main}
\small
\begin{tabular}{lcccc}
\toprule
Benchmark
& Validation NLL
& MDN discrepancy
& Empirical \(N=200\)
& Reference resolution \\
\midrule
OU--Gaussian
& \(0.2951\pm0.0045\)
& \(0.0057\pm0.0008\)
& \(0.0511\pm0.0008\)
& -- \\

OU--uniform
& \(0.2187\pm0.0033\)
& \(0.0054\pm0.0008\)
& \(0.0450\pm0.0005\)
& -- \\

OU--GMM
& \(0.3642\pm0.0023\)
& \(0.0082\pm0.0009\)
& \(0.0550\pm0.0008\)
& -- \\

Double well
& \(0.8373\pm0.0064\)
& \(0.0240\pm0.0011\)
& \(0.0817\pm0.0015\)
& \(0.0179\pm0.0003\) \\

Multiplicative
& \(0.0232\pm0.0086\)
& \(0.0127\pm0.0006\)
& \(0.0403\pm0.0012\)
& \(0.0092\pm0.0001\) \\

OU--2D
& \(0.1977\pm0.0072\)
& \(0.0064\pm0.0003\)
& \(0.0448\pm0.0002\)
& -- \\
\bottomrule
\end{tabular}
\end{table}

The Gaussian OU differential entropy floor is computed scenario by scenario,
\[
\frac1M\sum_{i=1}^{M}\frac12\log\bigl(2\pi e\,v_T^{(i)}\bigr),
\]
rather than from an aggregated variance. It equals $0.2896\pm0.0012$ across seeds ($0.2901$ at seed 0), while the validation NLL is $0.2951\pm0.0045$. The network is therefore close to, but not exactly at, the entropy floor. For Example~\ref{ex:ou-2d}, the exact entropy is $0.1800\pm0.0026$, leaving a larger gap that is consistent with representing a correlated Gaussian through diagonal mixture components. For the particle reference benchmarks, the MDN errors are close to but above the measured pooled reference resolutions, showing that finite reference uncertainty remains non negligible.

\begin{figure}[H]
\centering
\includegraphics[width=0.96\linewidth]{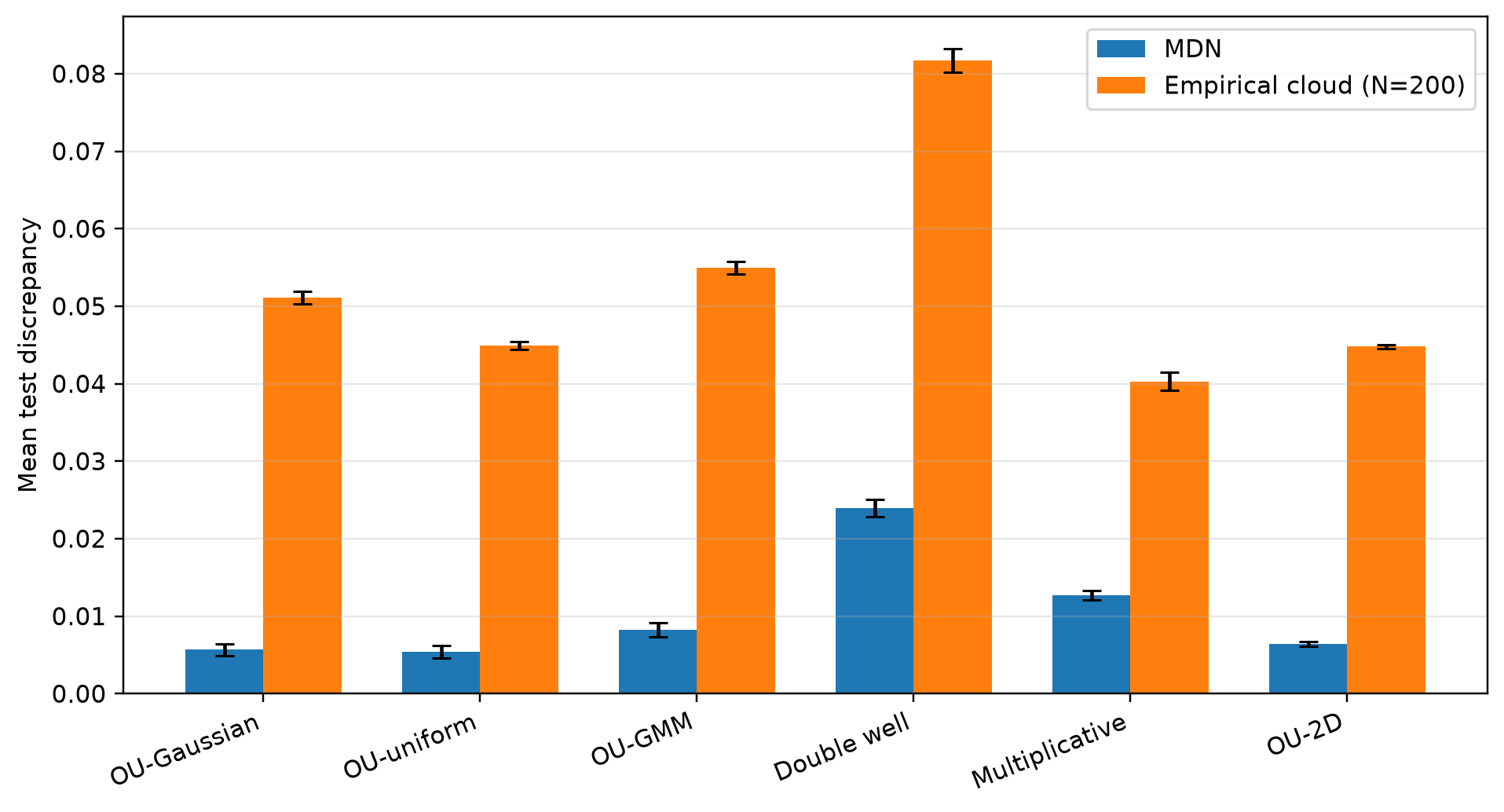}
\caption{Mean test law discrepancy for the MDN and the empirical $N=200$ particle cloud. Error bars are standard deviations across five independent seeds.}
\label{fig:law-errors}
\end{figure}

\rev{Figures~\ref{fig:double-well-law} and \ref{fig:ou-2d-law} provide
representative scenario level comparisons. The first shows that the MDN
captures asymmetric and bimodal conditional densities in the nonlinear
double well model. The second shows how several diagonal Gaussian components
combine to reproduce the location, dispersion, and correlation of the exact
two dimensional conditional law.}

\rev{For the double well model, the three displayed scenarios illustrate
substantially different conditional shapes. Scenario 94 concentrates most of
its mass in the left well while retaining a smaller right component. Scenario
148 displays two clearly separated modes of comparable importance, whereas
scenario 74 is dominated by the right well with a broad secondary mode on the
left. In every case, the MDN follows the locations, relative heights, and
widths of the reference modes despite the sampling fluctuations visible in
the particle histograms.}

\begin{figure}[H]
\centering
\includegraphics[width=0.98\linewidth]{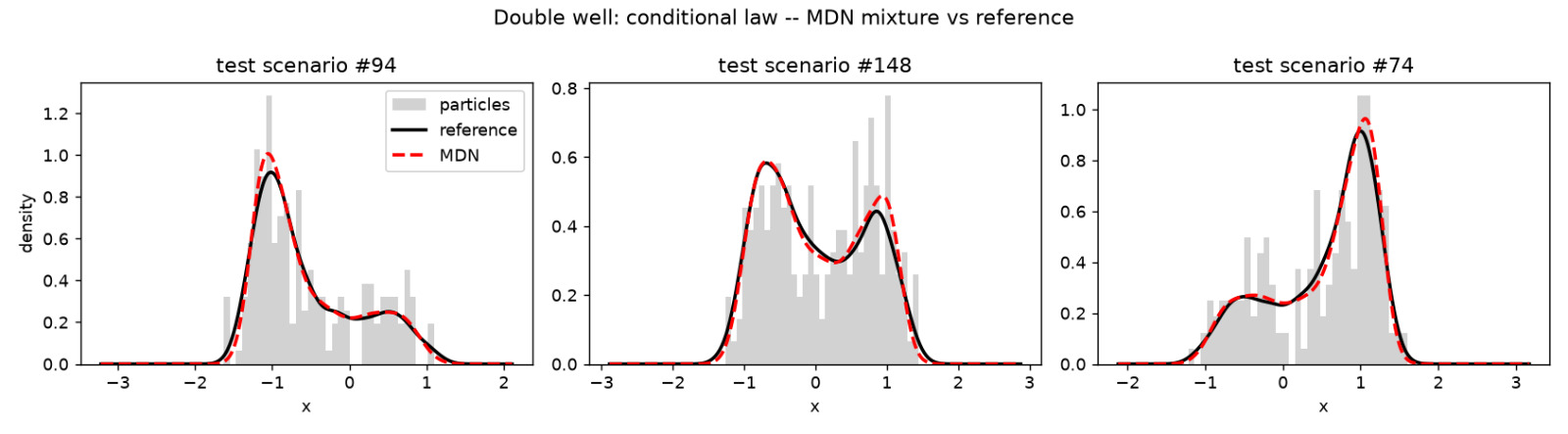}
\caption{\rev{Double-well benchmark (seed 0): predicted Gaussian-mixture densities $(J=5)$ against the N=200 terminal-particle histograms and kernel density estimates of the pooled independent reference systems ($N_{ref}=2000$ particles per replicate), for three representative test scenarios. The common noise tilts the occupancy of the two wells.}}
\label{fig:double-well-law}
\end{figure}

\begin{figure}[H]
\centering
\includegraphics[width=0.98\linewidth]{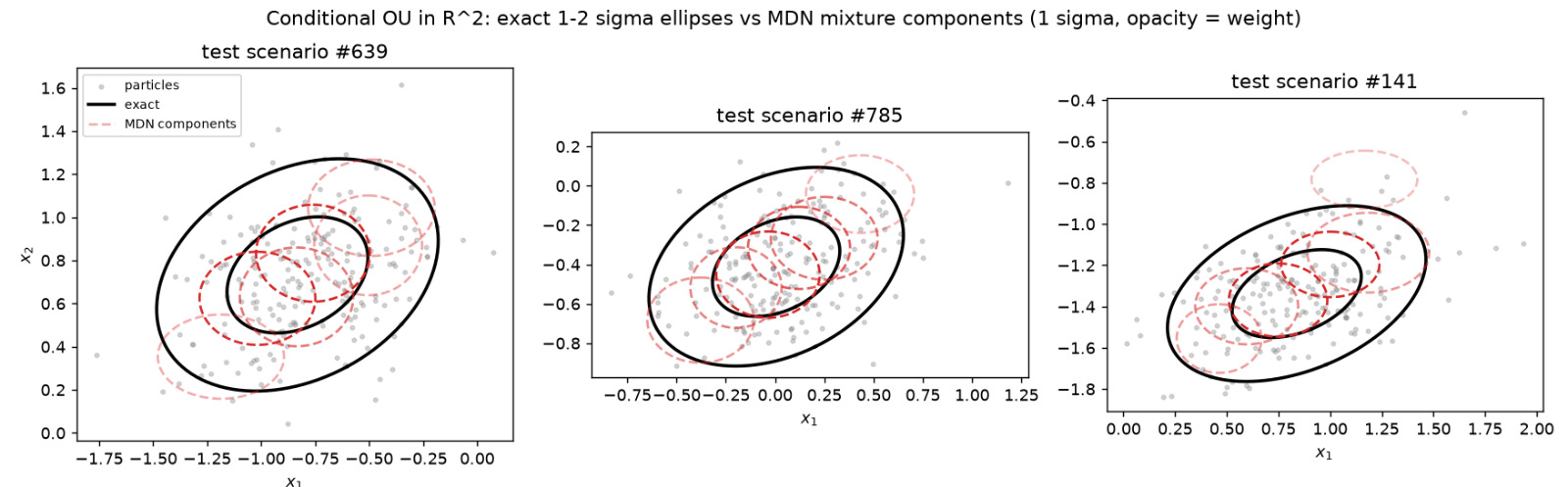}
\caption{\rev{Two-dimensional conditional OU benchmark (seed~0): exact conditional Gaussian
$1\sigma$ and $2\sigma$ ellipses (black), $N=200$ terminal particles (gray),
and $1\sigma$ ellipses of the $J=6$ diagonal Gaussian components predicted by
the MDN (red dashed, with opacity proportional to component weight), for three
representative test scenarios. The exact covariance is non-diagonal; the MDN captures the resulting correlation by combining several
diagonal components.}}
\label{fig:ou-2d-law}
\end{figure}

\subsection{Functional approximation}
\label{sec:functional-results}
The previous experiments provide numerical evidence consistent with the approximation architecture covered by Theorem~\ref{thm:main}.
We now consider a complementary stress test based on empirical quantiles.
Although quantiles generally fall outside the continuity assumptions of
Theorem~\ref{thm:main}, they are of considerable practical importance in
uncertainty quantification and risk analysis. The purpose of this experiment
is therefore not to validate the theorem, but to investigate the empirical
robustness of the proposed architecture on nonsmooth distributional
functionals.

Table~\ref{tab:functional-main} shows that the MDN measure channel improves the moment functional approximation on all six benchmarks. For exact law benchmarks, the final column is a genuine oracle representation. For double\_well and multiplicative it is an \emph{independent reference} representation: its Fourier inputs come from reference $A$ and its labels from reference $B$. It is therefore not a deterministic lower bound and can be slightly worse than the learned MDN after amortization.

\begin{table}[H]
\centering
\caption{Moment functional: relative $L^2$ test error in percent over five independent seeds. ``Exact/indep.'' denotes exact Fourier integrals for \rev{closed form} benchmarks and \rev{leakage free} independent reference integrals otherwise.}
\label{tab:functional-main}
\small
\begin{tabular}{lccccc}
\toprule
Benchmark & Empirical & MDN & Exact/indep. & Target floor & MDN adjusted \\
\midrule
OU--Gaussian   & $3.78\pm0.13$ & $0.49\pm0.07$ & $0.16\pm0.02$ & -- & -- \\
OU--uniform    & $3.18\pm0.14$ & $0.48\pm0.11$ & $0.13\pm0.02$ & -- & -- \\
OU--GMM        & $4.22\pm0.17$ & $0.70\pm0.07$ & $0.18\pm0.02$ & -- & -- \\
Double well    & $5.31\pm0.15$ & $2.24\pm0.15$ & $2.47\pm0.04$ & $1.77\pm0.03$ & $1.36\pm0.21$ \\
Multiplicative & $2.62\pm0.15$ & $1.40\pm0.41$ & $1.25\pm0.13$ & $0.79\pm0.04$ & $1.11\pm0.54$ \\
OU--2D         & $3.11\pm0.07$ & $0.69\pm0.06$ & $0.35\pm0.06$ & -- & -- \\
\bottomrule
\end{tabular}
\end{table}

For the nonlinear rows, the observed error includes the finite noise of the independent regression targets. The \rev{double well} MDN error is $2.24\%$. After subtracting the estimated target noise in quadrature, the error is $1.36\%$. The corresponding multiplicative values are $1.40\%$ observed and $1.11\%$ adjusted. These adjusted quantities are diagnostic rather than exact deconvolutions, but they make the remaining finite reference uncertainty explicit.

\begin{figure}[H]
\centering
\includegraphics[width=0.96\linewidth]{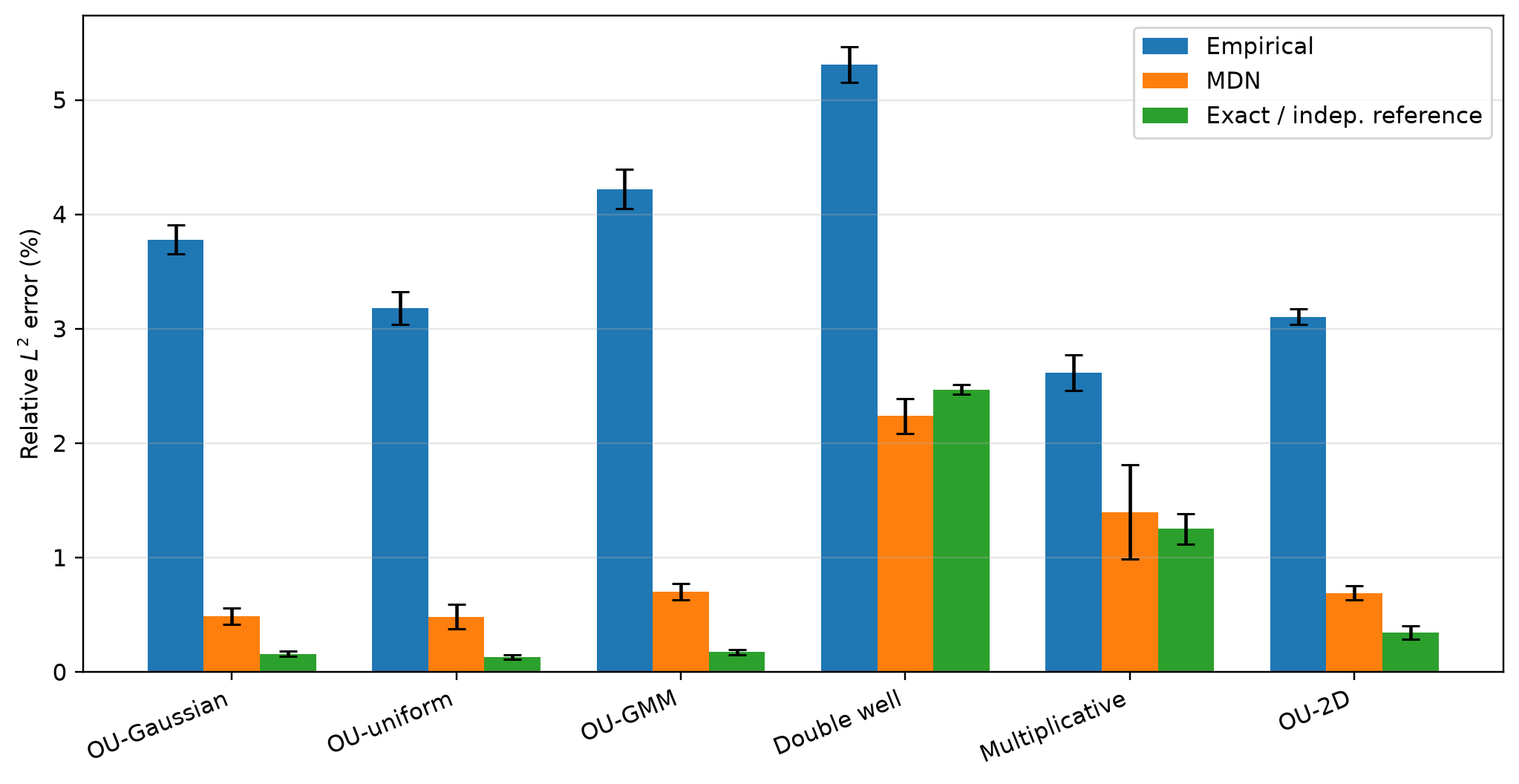}
\caption{Moment functional relative $L^2$ error for empirical, MDN and exact/independent reference measure channels. Error bars are standard deviations across five seeds.}
\label{fig:functional-errors}
\end{figure}

The quantile and tail stress test is summarized in Table~\ref{tab:quantile-main}. It is reported for seed 0 because the five seed campaign was reserved for the headline moment functional. The MDN remains substantially more accurate than the empirical plug in for both \rev{non Gaussian} initial law families and for the nonlinear models.

\begin{table}[H]
\centering
\caption{Quantile and tail functional at seed 0: relative $L^2$ test error in percent. Parenthesized values are noise adjusted for finite reference targets.}
\label{tab:quantile-main}
\small
\begin{tabular}{lccc}
\toprule
Benchmark & Empirical & MDN & Exact/independent reference \\
\midrule
OU--Gaussian   & $5.02$ & $0.82$ & $0.45$ \\
OU--uniform    & $4.12$ & $0.84$ & $0.57$ \\
OU--GMM        & $5.07$ & $1.34$ & $1.24$ \\
Double well    & $6.27$ & $2.74$ ($1.72$) & $3.16$ ($2.33$) \\
Multiplicative & $4.55$ & $2.32$ ($1.59$) & $2.42$ ($1.73$) \\
\bottomrule
\end{tabular}
\end{table}

\subsection{Sensitivity to features and neural hyperparameters}
\label{sec:sweeps}
The sweeps are one factor at a time experiments with all other settings fixed. Table~\ref{tab:sweeps} reports means over three seeds. They are empirical sensitivity checks, not approximation rate estimates.

\begin{table}[H]
\centering
\caption{Hyperparameter sensitivity. Values in each result sequence correspond to the settings listed in the middle column.}
\label{tab:sweeps}
\scriptsize
\begin{tabular}{p{0.23\linewidth}p{0.28\linewidth}p{0.40\linewidth}}
\toprule
Sweep and benchmark & Settings & Test result \\
\midrule
Mixture size $J$, OU--GMM & $1,2,3,5,8$ & mean $W_2$: $0.0199,0.0086,0.0084,0.0083,0.0086$ \\
Fourier frequencies $P$, OU--GMM & $1,2,4,6,8$ & mean $W_2$: $0.0163,0.0076,0.0083,0.0087,0.0094$ \\
Fourier frequencies $P$, OU--uniform & $1,2,4,6,8$ & mean $W_2$: $0.0049,0.0049,0.0054,0.0055,0.0061$ \\
Learning rate, OU--GMM & $10^{-4},3\cdot10^{-4},10^{-3},3\cdot10^{-3}$ & mean $W_2$: $0.0095,0.0079,0.0083,0.0084$ \\
MDN depth, OU--GMM & $1,2,3,4$ & mean $W_2$: $0.0080,0.0083,0.0075,0.0076$ \\
$\Psi$ depth, OU--GMM & $1,2,3,4$ & relative $L^2$ (\%): $0.79,0.75,0.74,0.73$ \\
Signature level, \rev{double well} & $1,2,3,4,5,6$ & mean $W_2$: $0.0721,0.0363,0.0238,0.0220,0.0251,0.0298$ \\
\bottomrule
\end{tabular}
\end{table}

The mixture sweep gives a structural sanity check: one Gaussian component cannot represent the \rev{two component} terminal family, while $J=2$ removes most of the gap and larger mixtures yield no systematic gain. For the sampled OU--GMM family, two positive Fourier frequencies already
provide the lowest mean discrepancy among the tested configurations. Additional frequencies produce no systematic improvement. The learning rate and depth sweeps show a broad stable region rather than a sharply tuned architecture. By contrast, signature depth has a strong effect up to level 4 on the double well: level 1 contains only endpoint increments, while higher levels incorporate increasingly rich path information. The law discrepancy decreases substantially through level 4, after which the improvement saturates and the error slightly increases, suggesting that a moderate signature truncation already captures most of the relevant path dependence.

\begin{figure}[H]
\centering
\includegraphics[width=0.73\linewidth]{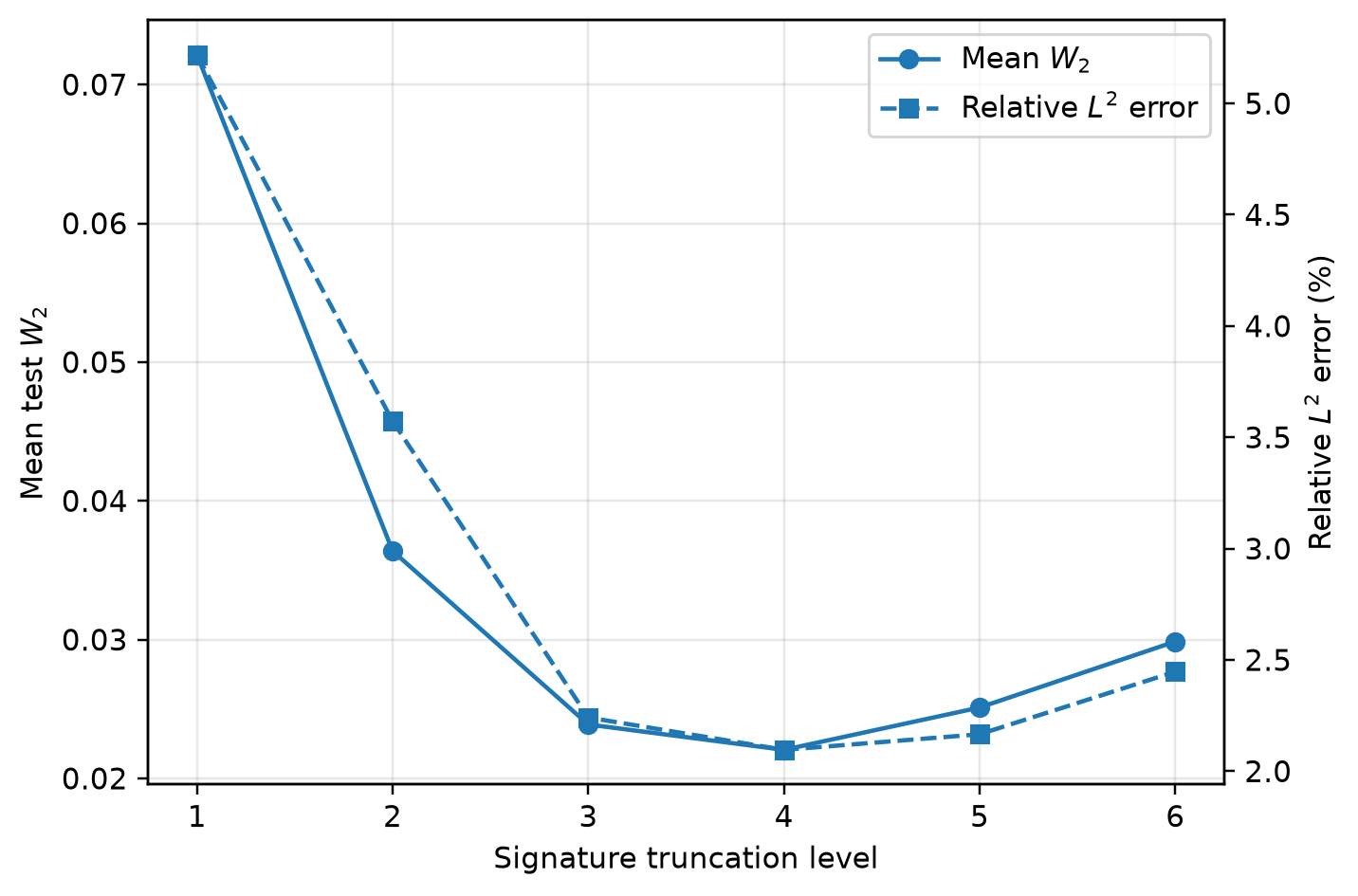}
\caption{Double well sensitivity to signature truncation. Both law and functional errors decrease materially from levels 1 to 4.}
\label{fig:signature-sweep}
\end{figure}

\begin{figure}[H]
\centering
\includegraphics[width=0.73\linewidth]{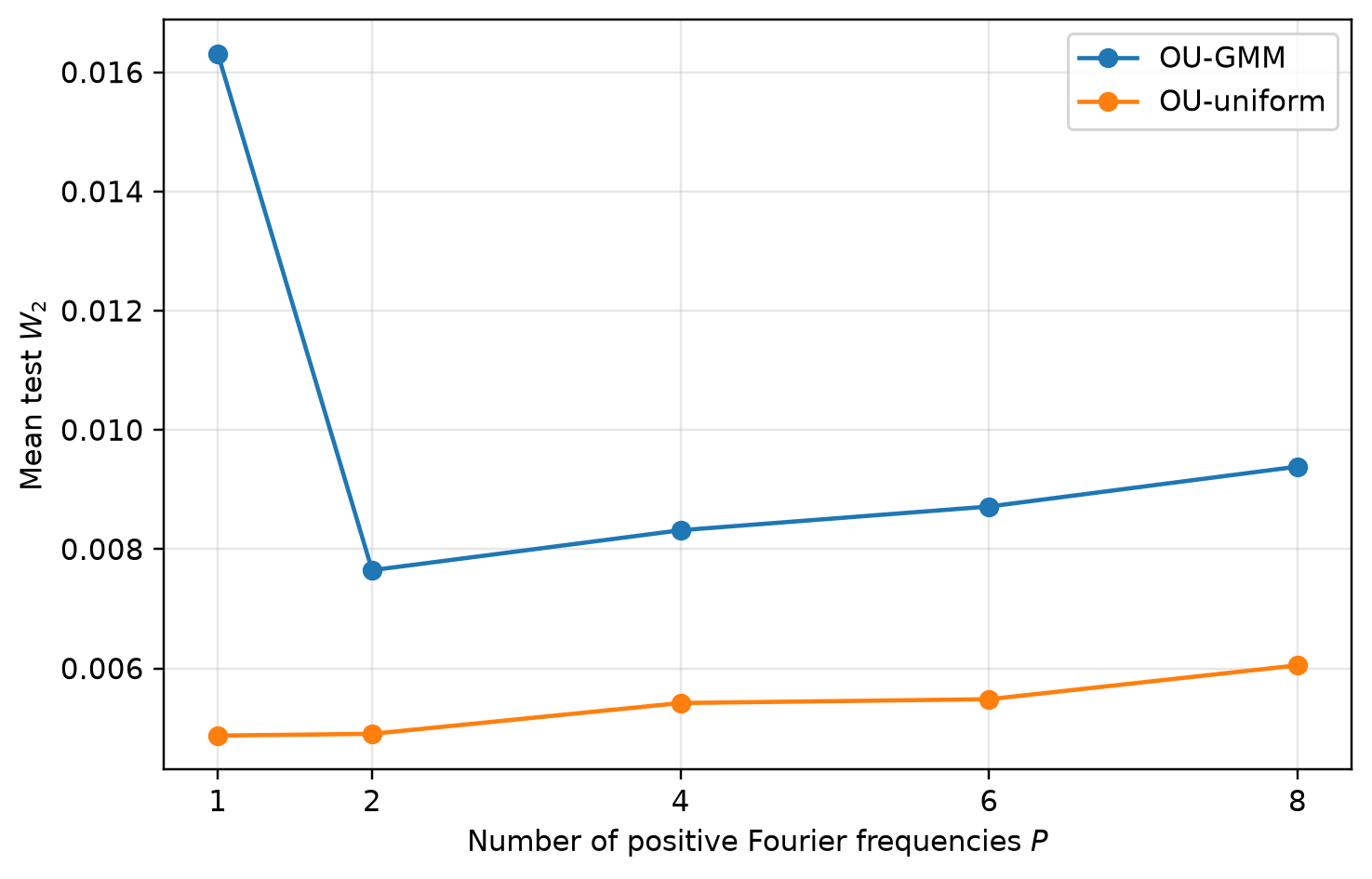}
\caption{Fourier order sensitivity for the two \rev{non Gaussian} OU benchmarks. The GMM family needs more than one positive frequency, whereas the uniform family is already stable at low order.}
\label{fig:fourier-sweep}
\end{figure}

\subsection{Single terminal observation per common noise scenario}

\label{sec:single-observation}

In this experiment, the particle system is still simulated with \(N=200\)
interacting particles, but the MDN receives only one fixed terminal
observation from each common noise scenario during training and validation.
Across three seeds on the Gaussian OU benchmark, it reaches
$W_2=0.0362\pm0.0054$, below the $N=200$ empirical cloud discrepancy
$0.0514\pm0.0010$. The downstream moment functional error is
$2.92\%\pm0.34\%$, compared with $3.74\%\pm0.16\%$ for the empirical plug in.
The full information configuration remains much more accurate, so the result
supports feasibility rather than equivalence.

A separate seed 0 sweep gives
\[
\begin{array}{c|cccc}
K_{\rm keep} & 1 & 8 & 64 & 200 \\
\hline
\text{mean }W_2 & 0.0069 & 0.0060 & 0.0056 & 0.0058
\end{array}
\]
Resampling particles across epochs makes the final estimate relatively insensitive to $K_{\rm keep}$, while larger values reduce gradient variance.

\subsection{It\^o--Stratonovich consistency}
\label{sec:ito-strat-num}
Using the same Brownian increments, $M=300$ scenarios, $N=2000$ particles and 200 time steps, we compare: Euler--Maruyama for the target It\^o equation; a Heun scheme for the Stratonovich equation with the corrected drift $a(m_s-x)-\frac12\sigma_0\sigma_0'(x)$; and the same Heun scheme with the naive uncorrected drift. The corrected gap to the It\^o reference is $0.00449\pm0.00348$, below the empirical resolution $0.01360$. The naive gap is $0.04941\pm0.02048$, approximately one order of magnitude larger. Thus the \rev{coefficient level} correction is numerically visible, while the paper's signature pipeline remains consistent with using the geometric lift only as a feature representation.

\bibliographystyle{plainurl}
\bibliography{AHR_references}

\end{document}